\tikzset{every picture/.style={line width=0.75pt}} 
\newcommand{\PP}{\mathbb{P}}
\newcommand{\M}{\mathcal{M}}
\newcommand{\g}[2]{g^{#1}_{#2}}
\newcommand{\BN}[3]{\mathcal{M}^{#2}_{#1,#3}}
\newcommand{\floor}[1]{\left\lfloor #1 \right\rfloor}
\newcommand{\ceil}[1]{\left\lceil #1 \right\rceil}
\newcommand{\isom}{\cong}
\newtheorem{theorem}{Theorem}[section]
\newtheorem{lemma}[theorem]{Lemma}
\newtheorem{prop}[theorem]{Proposition}
\newtheorem{cor}[theorem]{Corollary}
\theoremstyle{definition}
\newtheorem{defn}[theorem]{Definition}
\theoremstyle{definition}
\newtheorem{remark}[theorem]{Remark}
\newtheorem{example}[theorem]{Example}
\newtheorem{theoremintro}{Theorem}
\title{Some reducible and irreducible Brill--Noether loci.}
\author{Richard Haburcak}
\address{Department of Mathematics\\%
	The Ohio State University\\%
	231 W 18th Ave\\%
	Columbus, OH 43210\\%
	USA\\%
	ORCiD 0000-0002-6591-8918}
\email{haburcak.1@osu.edu}
\author{Montserrat Teixidor i Bigas}
\address{Mathematics Department, Tufts University\\%
	177 College Ave\\%
	Medford, MA 02155\\%
	USA\\%
	ORCiD 0000-0003-3747-3330}
	\email{mteixido@tufts.edu}
\title{Some reducible and irreducible Brill--Noether loci}
\keywords{Brill--Noether theory, theta characteristics, spin curves}
\subjclass{14H51, 14H10}
\date{}
\begin{document}

\thispagestyle{empty}
\vspace*{-.9cm}

\begin{abstract} We investigate limit linear series on chains of elliptic curves, giving a simple proof of a conjecture of Farkas stating the existence of curves with a theta-characteristic with a given number of sections for the expected range of genera. Using the additional structure afforded by considering limit linear series on chains of elliptic curves, we find examples of reducible Brill--Noether loci, admitting at least two components, with and without a theta-characteristic respectively. This allows us to display reducible Hilbert schemes for $r\ge 3$ and $d=g-1$. We also give examples of Brill--Noether loci with three components. On the positive side, we provide optimal bounds on the degree under which Brill--Noether loci are irreducible when $r=2$.
\end{abstract}

\maketitle

\vspace*{-0.9cm}

\section*{Introduction}\label{Introduction}

The set of linear series of degree $d$ and dimension $r$ on a generic curve of genus $g$ forms a scheme of expected dimension $\rho(g,r,d)=g-(r+1)(g-d+r)$.
When this number is greater than or equal to 0, the space of such linear series is non-empty on any smooth projective curve of genus $g$ and is irreducible  of dimension precisely $\rho$ on the generic curve.
This gives rise to a component of the Hilbert scheme of smooth curves of degree $d$ in ${\mathbb P}^r$ for $r\geq 3$ of dimension $3g-3+\rho(g, r,d)+\dim \operatorname{Aut}({\mathbb P}^r)$.
One could naively expect that the Hilbert scheme actually coincides with this expected component and that it is well behaved.
 This philosophy may have prompted Severi to claim that the Hilbert scheme of curves in  ${\mathbb P}^r$ is irreducible when $g-d+r<0$. 
 The irreducibility was proved by Harris in \cite{HHilbert} for $r=3, d>\frac 54g+1$, by Ein for $r=3$, $d>g+3$, for $r=4$, $d\ge g+4$ 
or more generally for $d>\frac{2r-2}{r+2}g+\frac{2r+3}{r+2}$ in \cite{EinHilbP^3,EinHilbP^r} 
and by Iliev in \cite{I} for $r=5$, $d\ge \max(\frac{11}{10}g+2,g+5)$, along with many other cases, see for example \cite{Balico_fontanari}.
In the case $r=3$ the bound on the degree has been improved so that the Hilbert scheme of curves of genus $g$ and degree also $g$ has been proved to be irreducible when non-empty  (see \cite{KKi})
and similar results are known for $g+1, g+2$ (see \cite{KK}).

 On the other hand, Mumford (see \cite{MMurphyL}) was the first to provide a non-reduced component of the Hilbert scheme.
 Ein gave examples in \cite{EinHilbP^r} of additional non-reduced components of the Hilbert scheme for $r\ge 6$.
  Later Vakil showed in \cite{VMurphyL} that Hilbert schemes succumb to Murphy's law, 
  meaning  that it is possible to construct a smooth curve in projective space whose deformation space has any given number of components,
  with any given singularity type and even  non-reduced behavior. 
   A very down to earth example of a second component of the Hilbert scheme is provided in \cite{LVV}.
    In \cite{CIK1,CIK2,CIK3}, the authors obtain additional components of the Hilbert scheme by considering coverings of curves of positive genus.

  Most known examples of additional components of the Hilbert scheme live in the range where the Brill--Noether number  is positive 
 and therefore there is a single component that dominates   ${\mathcal M}_g$,  the moduli space of curves of genus $g$.
 Also, most examples correspond to fairly large values of $g$ and, as mentioned, $r\geq 6$ (and often a lot higher).
 
In this paper, we focus on the negative range of $\rho$, where the generic curve of genus $g$ does not possess a $g^r_d$, and those that do constitute a proper subset ${\mathcal M}^r_{g,d}$ of ${\mathcal M}_{g}$, a Brill--Noether locus, which has expected dimension $3g-3+\rho(g,r,d)$. The existence of components of expected dimension was recently shown when $\rho$ is not too negative by Pflueger in \cite{pflueger_lego} (see also \cite{DBN}).

The reducibility of Brill--Noether loci is an interesting question in itself of which little is known. 
When $\rho=-1$,  Eisenbud and Harris showed in \cite{EH} that ${\mathcal M}^r_{g,d}$ is irreducible. 
In the proof of \cite[Corollary~3.7]{CHOI2022}, Choi and Kim show that $\BN{g}{2}{d}$ is irreducible when $\rho(g,2,d)=-2$. 
Here, we show that if $\BN{g}{2}{d}$ has no components where the $g^2_d$ always has a basepoint, then $\BN{g}{2}{d}$ is irreducible if $d>\frac{g+4}{2}$, $g\geq 2$ and $\rho(g,2,d)<0$ (see \Cref{irredM2gd}). We also give examples of reducible $\BN{g}{2}{d}$, showing that this bound on $d$ for irreducibility is optimal.

A natural strategy for finding reducible Brill--Noether loci is to look for curves with special geometric conditions that force them to have unexpected linear series.
Recall that an {$r$-dimensional} theta-characteristic on a curve $C$ is a line bundle whose square is the canonical and which has $r+1$ sections.
Theta-characteristics appeared classically associated to theta functions until Mumford provided a purely algebraic framework for their study in \cite{MTheta}.
For a generic curve, a theta-characteristic either has no sections or at most one. 
Therefore, the locus $T^r_g$, of curves of genus $g$ that possess a theta-characteristic of dimension $r$ is a proper subset of ${\mathcal M}_g$ if $r\ge 1$.
Work of Harris \cite{HTheta} led to the expectation that for sufficiently large $g$, $T^r_g$ should have codimension ${r+1 \choose 2}$ in ${\mathcal M}_g$.
Comparing the expected  dimension of the Hilbert scheme with the expected dimension of $T^r_g$, Farkas conjectured in \cite{F} that for $g\geq {r+2 \choose 2}$, $T^r_g$ should have a component of the expected codimension.
This conjecture was proved in \cite{Benzo}. Using limit linear series on chains of elliptic curves, we give here a new proof of the existence of $r$-dimensional theta-characteristics.
\begin{theoremintro}\label[theorem]{thmintro_exthch}
	If $g\ge {r+2\choose 2}$, then $T^r_g$ is non-empty 
	and has one component of the expected codimension $ {r+1\choose 2}$ in ${\mathcal M}_g$.
\end{theoremintro}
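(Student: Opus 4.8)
The plan is to degenerate to a chain of $g$ elliptic curves, build on it a limit theta-characteristic with exactly $r+1$ sections, and then regenerate to a smooth curve while tracking dimensions. Write $Y=E_1\cup\dots\cup E_g$ for the chain, with $E_i$ glued to $E_{i+1}$ along a node identifying $q_i\in E_i$ with $p_{i+1}\in E_{i+1}$. Chains of this type fill a boundary stratum $\Delta\subset\overline{\mathcal M}_g$ of dimension $2g-2$, coordinatized by the $j$-invariants of the $E_i$ together with the $g-2$ interior node parameters $q_i-p_i\in\Pic^0(E_i)$; smoothing the $g-1$ nodes accounts for the remaining $g-1$ directions of $\overline{\mathcal M}_g$. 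Since the compactified spin moduli space $\overline{\mathcal S}_g$ is proper over $\overline{\mathcal M}_g$, a theta-characteristic on a smooth curve degenerating to a point of $\Delta$ specializes to a spin structure on a semistable model of $Y$, and by the Eisenbud--Harris theory of limit linear series its sections organize into a limit $\mathfrak g^r_{g-1}$ (suitably interpreted) whose aspects are self-conjugate, being isomorphic to their Serre duals as $\eta\cong\omega-\eta$. I would turn this around and \emph{build} such an object directly.

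The combinatorial heart is to exhibit one explicit self-conjugate limit $\mathfrak g^r_{g-1}$ with $h^0=r+1$. On $E_i$ the aspect $V_i$ is an $(r+1)$-dimensional subspace of the $(g-1)$-dimensional space $H^0(E_i,\eta_i)$, recorded by vanishing sequences $a^i_\bullet$ at $p_i$ and $b^i_\bullet$ at $q_i$; compatibility at the node $N_i$ imposes $a^{i+1}_j+b^i_{r-j}\ge g-1$ for all $j$, and self-conjugacy pins $b^i_\bullet$ down in terms of $a^i_\bullet$. I would prescribe these sequences by an explicit rule along the chain so that the space of sections vanishing to the prescribed orders grows by one roughly every other component; fitting a full set of $r+1$ such sections into a chain of this shape is exactly what forces $g\ge\binom{r+2}{2}$, which is where the hypothesis on $g$ is used sharply. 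For a chain whose node parameters satisfy the resulting finite list of constraints -- each equality forced at a node, or each extra ramification forced by $\eta$ being half-canonical, amounting to a prescribed $\Z$-combination of the $q_i-p_i$ (up to $2$-torsion) vanishing in the relevant $\Pic^0$ -- this data assembles into a limit theta-characteristic with $h^0=r+1$. Because linear series on elliptic curves are completely explicit, one checks by hand that the construction is realizable and that the constraints can be analyzed node by node, are independent, and number exactly $\binom{r+1}{2}$ -- matching the expected codimension of $\mathcal S^r_g:=\{(C,\eta):h^0(\eta)\ge r+1\}$ in $\mathcal S_g$ coming from the symmetric structure of the self-dual pairing. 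Dually, one checks that \emph{no} admissible self-conjugate collection imposes fewer than $\binom{r+1}{2}$ conditions, so the locus in $\Delta$ of chains carrying any limit theta-characteristic with $h^0\ge r+1$ has dimension at most $2g-2-\binom{r+1}{2}$.

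It remains to regenerate and count. By the Eisenbud--Harris smoothing theorem the limit $\mathfrak g^r_{g-1}$ deforms to a $\mathfrak g^r_{g-1}$ on the generic fibre of a one-parameter smoothing of the chain, and, $\overline{\mathcal S}_g\to\overline{\mathcal M}_g$ being finite, the accompanying spin structure deforms as well, so the generic such curve carries an honest theta-characteristic with $h^0\ge r+1$; in particular $T^r_g\neq\emptyset$. Running over all smoothings of the constructed chains yields a family of smooth curves of dimension $(2g-2-\binom{r+1}{2})+(g-1)=3g-3-\binom{r+1}{2}$ inside $T^r_g$. Conversely, the component $Z$ of $\overline{T^r_g}$ containing this family has $Z\cap\Delta\neq\emptyset$ (the family degenerates into $\Delta$), so, using semicontinuity of $h^0$ on $\overline{\mathcal S}_g$ and the bound from the previous paragraph, $\dim Z\le\dim(Z\cap\Delta)+\codim_{\overline{\mathcal M}_g}\Delta\le(2g-2-\binom{r+1}{2})+(g-1)$. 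Squeezing the two inequalities, $Z$ has dimension exactly $3g-3-\binom{r+1}{2}$, i.e.\ codimension $\binom{r+1}{2}$ in $\mathcal M_g$.

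The main obstacle is the combinatorial step: writing down one admissible self-conjugate system of vanishing sequences that produces precisely $r+1$ sections -- the place where $g\ge\binom{r+2}{2}$ must enter -- together with the twin verifications that the node-parameter constraints it imposes are independent and exactly $\binom{r+1}{2}$ in number, and that no admissible system does better. Everything else is the standard regeneration theorem, properness of $\overline{\mathcal S}_g$, and the elementary dimension count above.
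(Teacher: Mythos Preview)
Your overall architecture---degenerate to a chain, exhibit a self-conjugate limit $\mathfrak g^r_{g-1}$, check the codimension is $\binom{r+1}{2}$, then smooth---matches the paper's, but the ``combinatorial step'' you defer is precisely where the argument breaks down on a pure chain of elliptic curves, and this is the paper's main point. For $r\ge 3$ there is \emph{no} admissible symmetric filling of the $(r{+}1)\times(r{+}1)$ square imposing exactly $\binom{r+1}{2}$ independent node conditions: the naive symmetric filling has an antidiagonal (at positions $(r{-}2,r{+}1),(r{-}1,r),(r,r{-}1),(r{+}1,r{-}2)$) whose entries violate condition~(A3), and repairing it by replacing those entries drops one torsion condition, so the locus of chains admitting such a limit theta-characteristic has codimension only $\binom{r+1}{2}-1$. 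The paper states this explicitly at the start of \S\ref{secthch}: ``This works for $r=2$ but, unfortunately, not for $r\ge 3$.'' So your expectation that ``no admissible self-conjugate collection imposes fewer than $\binom{r+1}{2}$ conditions'' is false on the chain you chose.

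The paper's fix is to change the degeneration: replace the two offending consecutive elliptic components $C_{g-5},C_{g-4}$ by a single curve $D$ of genus~$2$ glued at Weierstrass points. On $D$ one writes down the aspect by hand (a line bundle $L$ with $L^2=K_D(2(g-6)P+8Q)$ and a specified vanishing sequence), and a short argument with the hyperelliptic involution forces $P,Q$ and an auxiliary point $R$ all to be Weierstrass, recovering the missing condition. This mixed chain carries a limit theta-characteristic of the correct codimension, and then the smoothing/dimension count proceeds as you outline. In short, your plan is the right shape but would not go through as written; the missing idea is that one must leave the pure-elliptic-chain locus and insert a genus~$2$ component to make the combinatorics close up.
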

As a corollary, we find new reducible Brill--Noether loci.
\begin{theoremintro}\label[theorem]{thmintro_thredMgrd}
	For $r\geq 3$ and $g= {r+2\choose 2}$, the Brill--Noether locus ${\mathcal M}^r_{g,g-1}$ is reducible with one component consisting of curves with a theta-characteristic 
	and at least another component with generic curve not possessing a theta-characteristic of dimension $r$.
\end{theoremintro}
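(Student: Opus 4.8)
The plan is to exhibit two distinct components of $\BN{g}{r}{g-1}$: the closure of the theta-characteristic component furnished by \Cref{thmintro_exthch}, and a second component obtained by smoothing a limit $g^r_{g-1}$ on a chain of elliptic curves which is not a degeneration of theta-characteristics. We begin with a numerical observation. A theta-characteristic of dimension $r$ is an $r$-dimensional linear series of degree $g-1$, so $T^r_g \subseteq \BN{g}{r}{g-1}$; and when $g = \binom{r+2}{2}$ one computes $\rho(g,r,g-1) = g-(r+1)^2 = -\binom{r+1}{2}$, so the expected dimension $3g-3+\rho(g,r,g-1) = 3g-3-\binom{r+1}{2}$ of $\BN{g}{r}{g-1}$ is exactly the dimension of the component of $T^r_g$ produced by \Cref{thmintro_exthch}. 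Thus that theta component already has the expected dimension of the whole Brill--Noether locus, and the proof of \Cref{thmintro_exthch}, which produces it by smoothing a dimensionally proper limit linear series on a chain of elliptic curves, in fact exhibits its closure $\overline{A}$ as a component of $\BN{g}{r}{g-1}$ whose general member carries an $r$-dimensional theta-characteristic; every curve of $\overline{A}\subseteq\overline{T^r_g}$ carries a theta-characteristic with at least $r+1$ sections.

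The substance of the argument is the construction of a curve $[C_1]\in\BN{g}{r}{g-1}$ carrying no theta-characteristic with at least $r+1$ sections. We work on a chain $X = E_1\cup\cdots\cup E_g$ of elliptic curves with general $j$-invariants, and prescribe vanishing sequences $0\le a^i_0 < \cdots < a^i_r \le g-1$ at the nodes, chosen so that: (i) the numerical inequalities for a refined limit $g^r_{g-1}$ hold and the resulting limit series is smoothable, which we verify using the existence and smoothing criteria for limit linear series on chains of elliptic curves; and (ii) some vanishing sequence fails to be self-complementary, i.e.\ $a^i_j + a^i_{r-j} \ne g-1$ for some $i$ and $j$. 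The involution $L \mapsto K_C\otimes L^{-1}$ on the $g^r_{g-1}$'s of a curve has the theta-characteristics as its fixed points, and on $X$ it sends a refined limit series to its dual, reversing and complementing the vanishing sequences at each node. Hence a limit theta-characteristic with $r+1$ sections on $X$ would have self-complementary vanishing sequences at every node, which by (ii) is incompatible with our choice; taking $X$ general among chains admitting a limit series of the chosen numerical type, $X$ carries no limit theta-characteristic with $r+1$ sections.

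Smoothing the chosen limit series produces $[C_1]\in\BN{g}{r}{g-1}$. If $C_1$ had a theta-characteristic $\theta$ with $h^0(\theta)\ge r+1$, then, extending $\theta$ over the one-parameter degeneration of $C_1$ to $X$ and using properness of the relevant moduli of line bundles, $\theta$ would specialize to a limit theta-characteristic with $r+1$ sections on $X$, which does not exist; hence $C_1$ carries no theta-characteristic of dimension $\ge r$. Since every curve of $\overline{A}$ carries a theta-characteristic with at least $r+1$ sections, we get $[C_1]\notin\overline{A}$, and as $\overline{A}$ is a component of $\BN{g}{r}{g-1}$, the point $[C_1]$ lies on a different component $Z$; thus $\BN{g}{r}{g-1}$ is reducible. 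Finally, the construction above sweeps out a family of dimension $3g-3+\rho(g,r,g-1) = \dim Z$ of smoothings of limit $g^r_{g-1}$'s on general chains of the chosen type, hence dense in $Z$; since a general such chain carries no limit theta-characteristic with $r+1$ sections, the specialization argument shows the generic curve of $Z$ has no $r$-dimensional theta-characteristic, as required. The hypothesis $r\ge 3$ enters in ensuring that the locus of chains carrying a limit theta-characteristic with $r+1$ sections has positive codimension inside the locus of chains carrying the chosen non-self-complementary limit $g^r_{g-1}$, and that the various dimension counts come out as stated.

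The main obstacle is the construction of the second paragraph: pinning down an explicit admissible choice of non-self-complementary vanishing sequences that simultaneously yields existence of the limit $g^r_{g-1}$ on a chain with general $j$-invariants together with its smoothability, and then verifying, via the combinatorics of these sequences, that the general chain carrying such a series carries no self-dual limit $g^r_{g-1}$ with $r+1$ sections. The accompanying dimension bookkeeping, once this combinatorial input is in hand, is routine.
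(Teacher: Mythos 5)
Your strategy is the same as the paper's (degenerate to a chain of elliptic curves, exhibit an asymmetric smoothable limit $\g{r}{g-1}$, and rule out theta-characteristics by showing the chain supports no limit theta-characteristic), but there is a genuine gap, and it is precisely the step you defer as ``the main obstacle'': you never exhibit an admissible, smoothable, non-self-complementary choice of vanishing data, nor the argument that the general chain carrying it admits no limit theta-characteristic. That is the actual content of the paper's proof: \Cref{defn_Ncomponent} and \Cref{fig_Nfilling} give explicit asymmetric fillings of the $(r+1)\times(r+1)$ square (filled along South-West to North-East diagonals, with the second half of each diagonal repeating the first half), their admissibility is checked against conditions (A1)--(A3), and smoothing with the correct codimension then follows from \cite[Theorem~2.1]{DBN}, producing the component $N^r_{g,g-1}$. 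Without such an explicit construction the second component is not established.

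Moreover, the one argument you do offer for excluding limit theta-characteristics is a non sequitur. Your condition (ii) --- that the chosen series has a non-self-complementary vanishing sequence --- only shows that the series you constructed is not itself a limit of theta-characteristics. A theta-characteristic on the smoothed curve would specialize to a \emph{different} limit $\g{r}{g-1}$ on $X$, with its own self-complementary vanishing data, so its existence does not contradict (ii). What must be proved is that the torsion conditions imposed on $X$ by the chosen filling are incompatible with \emph{every} symmetric admissible filling, and this is exactly the combinatorial verification you leave open. The paper settles it with a $2$-torsion count: the fillings of \Cref{fig_Nfilling} force $2$-torsion at only two nodes of the chain, whereas a symmetric admissible filling forces equal entries in the cells adjacent to the main diagonal, hence $2$-torsion on at least $r\geq 3$ components; therefore the general chain of the chosen type carries no limit theta-characteristic, and the specialization argument then shows the general curve of $N^r_{g,g-1}$ carries no $r$-dimensional theta-characteristic. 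Until you supply an explicit filling together with such a torsion count (or a substitute for it), the proof is incomplete.
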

    From the deformation properties of half canonical series, this gives rise to a second component of the Hilbert scheme.
  These components already appear when $r=3$, $g=10$ and they should probably be expected for all values of $r\ge 3$ and relatively small values of $g$.

  Finally, using the more explicit description afforded by using chains of elliptic curves, we obtain a third component of the Brill--Noether locus containing $k$-gonal curves, in addition to the two components found above, for particular values of $g$.
  \begin{theoremintro}\label[theorem]{thmintro_thm:3comp_BNloci}
  	Suppose $r=3,5,7$ or $r\geq 8$, and let $g={r+2 \choose 2}$. Then $\BN{g}{r}{g-1}$ has at least three components.
  \end{theoremintro}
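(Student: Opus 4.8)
The plan is to exhibit a third component of $\BN{g}{r}{g-1}$ whose generic point is a $k$-gonal curve, for a suitable $k=k(r)$, and then to check that it differs from the theta-characteristic component of \Cref{thmintro_exthch} and from the component of \Cref{thmintro_thredMgrd}. Suppose first that $r$ is odd and set $k=\frac{r+3}{2}$; since $g={r+2\choose 2}$ this is an integer and $rk=g-1$. On a general $k$-gonal curve $C$ let $A$ be the pencil $g^1_k$. As the scrollar invariants of a general $k$-gonal curve are as balanced as possible, $h^0(nA)=n+1$ for all $n\le\frac{g-1}{k-1}=\frac{r(r+3)}{r+1}$, and $r$ lies strictly in this range, so $h^0(rA)=r+1$ and $rA$ is a $g^r_{g-1}$ on $C$. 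Hence $\BN{g}{1}{k}\subseteq\BN{g}{r}{g-1}$. Moreover $\dim\BN{g}{1}{k}=2g+2k-5=2g+r-2$, which, using $g={r+2\choose 2}$, equals $3g-3+\rho(g,r,g-1)$. Since $\BN{g}{1}{k}$ is irreducible and every component of $\BN{g}{r}{g-1}$ has dimension at least $3g-3+\rho(g,r,g-1)$, the variety $\BN{g}{1}{k}$ is a component of $\BN{g}{r}{g-1}$, provided it is not contained in a larger one -- which I would rule out by the elliptic-chain degeneration (showing $\BN{g}{r}{g-1}$ has dimension exactly $3g-3+\rho(g,r,g-1)$ near the general $k$-gonal curve).

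This component is new. Its generic point is a curve of gonality exactly $k<\frac{g+2}{2}$. The theta-characteristic component of \Cref{thmintro_exthch} has the same dimension, so were its generic member $k$-gonal it would coincide with $\BN{g}{1}{k}$, forcing every $k$-gonal curve of genus $g$ to carry an $r$-dimensional theta-characteristic -- which is impossible. The component of \Cref{thmintro_thredMgrd} is produced there by smoothing a chain of elliptic curves with general attaching points, whose generic member has maximal gonality $\ceil{\frac{g+2}{2}}>k$; so it too is distinct from $\BN{g}{1}{k}$. Combining this with \Cref{thmintro_exthch} and \Cref{thmintro_thredMgrd} yields three components of $\BN{g}{r}{g-1}$.

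For $r$ even, $\frac{r+3}{2}$ is not an integer and the description via $rA$ breaks, so I would instead take $k=\frac{r+4}{2}$ and appeal to Brill--Noether theory for $k$-gonal curves -- equivalently, build a limit $g^r_{g-1}$ of a suitable combinatorial type on a $k$-gonal chain of elliptic curves: a general $k$-gonal curve of genus ${r+2\choose 2}$ carries a $g^r_{g-1}$ exactly when $\rho_k(g,r,g-1)=\max_{1\le m\le r}\bigl(g-m^2-(r+1-m)k\bigr)\ge 0$. A short computation with $g={r+2\choose 2}$ shows the maximum is attained near $m\approx\frac{r+4}{4}$ and is non-negative there precisely for even $r\ge 8$; for such $r$ one gets $\dim\BN{g}{1}{k}=2g+2k-5=3g-3+\rho(g,r,g-1)+1$, larger than the dimension of the first two components, so $\BN{g}{1}{k}$ is once more a new component. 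The even cases $r=2,4,6$ are excluded because there the least $k$ with $\dim\BN{g}{1}{k}\ge 3g-3+\rho(g,r,g-1)$ has $\rho_k<0$, so only a proper sublocus of $\BN{g}{1}{k}$ meets $\BN{g}{r}{g-1}$, and its dimension is no longer large enough -- nor its generic member provably distinct from that of the component of \Cref{thmintro_thredMgrd} -- to yield a third component.

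I expect the main obstacle to be twofold. First, one must verify that a general $k$-gonal curve really carries a $g^r_{g-1}$ with exactly $r+1$ sections: for odd $r$ this is the scrollar-invariant bound $h^0(rA)=r+1$, while for even $r$ it requires the more delicate machinery of limit linear series on $k$-gonal elliptic chains, including smoothability. Second, the dimension bookkeeping must be sharp enough both to place $\BN{g}{1}{k}$ (or the relevant sublocus) inside $\BN{g}{r}{g-1}$ with the stated dimension and to certify that the excluded values $r=2,4,6$ genuinely fail, which entails controlling the exact dimension of the $k$-gonal Brill--Noether locus when $\rho_k<0$.
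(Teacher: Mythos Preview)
Your overall strategy matches the paper's: exhibit a gonality locus $\BN{g}{1}{k}$ inside $\BN{g}{r}{g-1}$ and argue it cannot sit in either $ET^r_g$ or $N^r_{g,g-1}$. For odd $r$ your choice $k=\frac{r+3}{2}$ agrees with the paper's $\kappa(g,r,g-1)$, and for $r\ge 8$ the paper likewise obtains the third component from $\BN{g}{1}{\kappa}$, using a closed formula $\kappa=2r+2+\lfloor -\sqrt{2r(r+1)}\rfloor$ rather than Pflueger's $\rho_k$. So the architecture is the same.

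There is, however, a genuine gap in your separation of the gonality component from $N^r_{g,g-1}$. You assert that $N^r_{g,g-1}$ is produced ``by smoothing a chain of elliptic curves with general attaching points, whose generic member has maximal gonality $\lceil\frac{g+2}{2}\rceil$.'' This is false: the component $N^r_{g,g-1}$ is built from a chain with \emph{specific torsion conditions} on the nodes (those dictated by the repeated indices in the asymmetric filling), not from a general chain. Such chains can and do lie in the closure of loci of much lower gonality than $\lceil\frac{g+2}{2}\rceil$. The paper handles this correctly by a torsion-order argument (its Lemmas on gonality of chains): if a chain is a limit of $k$-gonal curves then every torsion order appearing among the $P_i-Q_i$ is at most $k$; since the $N^r_{g,g-1}$ filling forces some torsion order equal to $2\lceil\frac{r+1}{2}\rceil$, the general curve in $N^r_{g,g-1}$ has no $g^1_k$ for $k\le r$. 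This is what actually separates $N^r_{g,g-1}$ from $\BN{g}{1}{\kappa}$, since one checks $\kappa\le r$. Your argument, as written, does not establish this.

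Two smaller points. First, you aim to show $\BN{g}{1}{k}$ \emph{is} a component; the paper only claims there is a component \emph{containing} it, which is all that is needed and avoids your hand-waved ``elliptic-chain degeneration'' step. Second, your separation of $\BN{g}{1}{k}$ from $ET^r_g$ ends with ``forcing every $k$-gonal curve of genus $g$ to carry an $r$-dimensional theta-characteristic---which is impossible'' without justification; the paper instead shows directly (again via torsion orders, now of size $2r$ in the $ET^r_g$ filling) that the general curve in $ET^r_g$ has no $g^1_k$ for $k\le 2r-1$.
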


\subsection*{Acknowledgments} We would like to thank Asher Auel, Andrei Bud, Gavril Farkas, Ravi Vakil, and Isabel Vogt for helpful conversations, as well as the anonymous referee for their detailed comments and suggestions.
This paper was started at ``A Panorama of Moduli Spaces" at Goethe University Frankfurt am Main. This research was partially conducted during the time when R.H. was supported by the National Science Foundation under Grant No. DMS-2231565.

\section{Background}\label{sec:Background}

\subsection{Theta-characteristics}

A theta-characteristic on a curve $C$ with dualizing sheaf $\omega_C$ is a line bundle $L$ such that $L^{\otimes 2}\isom \omega_C$.
A theta-characteristic  is called even or odd if $h^0(C,L)$ is even or odd. 
Parity of a theta-characteristic is preserved under deformation, so that every curve of genus $g$ has $2^{g-1}(2^g+1)$ even theta-characteristics and  $2^{g-1}(2^g-1)$ odd ones.
For a theta-characteristic with two or more sections, the kernel of the Petri map is non-empty. 
In particular, as a generic curve of genus $g$ has injective Petri map, a generic curve has theta-characteristics with either one or no sections but no theta-characteristics with two or more sections.
The set of smooth curves that have a theta-characteristic with more than two sections is then a proper subset of ${\mathcal M}_g$.

\begin{defn}\label[definition]{defLocThCh}
	Define $T^r_g\coloneqq\{ [C]\in {\mathcal M}_g \text{ such that } \exists L \text{ on } C, L^2=K_C, h^0(C,L)\ge r+1\}$.
\end{defn}

It is known that $T^1_g$ is an irreducible divisor of ${\mathcal M}_g$, cf.~\cite{thetdiv}.
It was proved by Harris in \cite{HTheta} that the codimension of any component of $T^r_g$ is at most $ \frac {r(r+1)}2={r+1\choose 2}$.
This inequality is an equality if $r=2$ and if $r=3$ for $g$ sufficiently large, as shown in \cite{semican}.
It is not always the case, however, that $\dim T^r_g=3g-3- {r+1\choose 2}$.
 For example, the space of  hyperelliptic curves is $2g-1$-dimensional while every hyperelliptic curve has a theta-characteristic with dimension up to the Clifford index $\lfloor \frac {g-1}2\rfloor $.
 So, in this case, the actual codimension is $g-1$, much smaller than the expected codimension which is of the order of $\frac{g^2-1}4$.
 
In \cite{F}, Farkas proved that for $r\le 11$ and  $g> g(r)$ (with specific ad hoc $g(r)$), there exists one component of $T^r_g$ of the correct dimension.
He then conjectured that a component of the correct dimension should exist for any $r$ and $g\ge {r+2\choose 2}$.
This conjecture was proved in \cite{Benzo} using a smoothing criteria for curves immersed in projective space.
In \Cref{secthch}, we give  a simpler proof of this result.

\subsection{Limit linear series on chains of elliptic curves}
A limit linear series of degree $d$ and dimension $r$ on a curve of compact type consists of a line bundle of degree $d$ and an $r+1$-dimensional space of sections of this line bundle on every component.
The sections need to satisfy that if two components $C, C'$ are glued by identifying the point $P\in C$ to the point $P'\in C'$, 
then the orders of vanishing at $P$ of the space of sections on $C$ and the orders of vanishing at $P'$ of the space of sections on $C'$ can be paired so that corresponding pairs  add to at least $d$,

Choose $g$ elliptic curves $C_1,\dots, C_g$.
On each  curve $C_i$, choose points $P_i, Q_i$.
We glue $C_i$ to $C_{i+1}$ for $i=1,\dots, g-1$, gluing $Q_i$ to $P_{i+1}$.
We will call the resulting curve \[C_1 \cup_{Q_1\sim P_2} C_2 \cup_{Q_2 \sim P_3} C_3\cup\cdots\cup C_{g-1} \cup_{Q_{g-1}\sim P_g} C_g\] a chain of elliptic curves.
For a generic elliptic chain, the components of the sets of linear series correspond to Young tableaux, 
that is, fillings of an $(r+1)\times (g-d+r)$-rectangle with numbers chosen among $1,2,\dots, g$ so that both rows and columns appear in increasing order (see \cite{LT}).
 
We will consider special chains of elliptic curves and limit linear series on them that extend to linear series on nearby non-singular curves.
Given positive integers $g, r, d$, there is a bijection between  fillings of an $(r+1)\times (g-d+r)$ rectangle satisfying the conditions we will list below and limit linear series as follows (see  \cite[Proposition~1.4]{DBN}):
Assume that the chain of elliptic curves is generic except for components   $C_{i_1},\dots,C_{i_e}$  satisfying  $l_1(P_{i_1}-Q_{i_1})=0,\dots, l_e(P_{i_e}-Q_{i_e})=0 $ 
with $l_i$ the smallest positive integer satisfying such a condition.
Then the space of refined limit linear series on the chain are in bijection with the fillings of the rectangle such that:
\begin{enumerate}
	\item[(A1)] both rows and columns are strictly increasing,
	\item[(A2)] only the indices 
	$i_1,\dots, i_e$  may appear more than once  and when they do, the grid distance between the spots corresponding to two such appearances is divisible by $l_i$.
	\item[(A3)] no index could be replaced by one of the $i_k$ while still satisfying the previous two conditions.
\end{enumerate}  
If in addition, the codimension in the space of limit linear series of such chains coincides with $-\rho$,
 then the limit linear series extends to a linear series on nearby curves, as in \cite[Theorem~2.1]{DBN}.

We will operate the other way around to obtain limit linear series on chains of elliptic curves that smooth to nearby curves. Given a filling of the rectangle, the position of the double indices determines which elliptic chains we consider. That is, 
we assume that the elliptic components and nodes are generic except for the torsion required by the repeated indices in the rectangle.
 \begin{figure}[h!]
\begin{tikzpicture}[scale=.45]
\begin{scope}[ ]
				\foreach \x in {0, 1,2,3,4} {\draw[thick] (0,\x) -- (4, \x); }
				\foreach \x in {0, 1,2,3,4} {\draw[thick] (\x,0) -- ( \x,4); }
				
				\node at (.5, 3.5){1}; 
				\node [color=cyan]  at (1.5, 3.5){2}; 
				\node  [color=cyan] at (2.5, 3.5){3}; 
				\node[color=cyan] at (3.5, 3.5){6}; 
				
				\node [color=blue] at (.5, 2.5){2};
				\node at (1.5, 2.5){4}; 
				\node [color=cyan]  at (2.5, 2.5){5}; 
				\node [color=cyan]  at (3.5, 2.5){7}; 
				
				\node [color=blue] at (.5, 1.5){3};
				\node  [color=blue]  at (1.5, 1.5){6}; 
				\node   at (2.5, 1.5){8}; 
				\node [color=cyan]  at (3.5, 1.5){9}; 
				
				\node [color=blue] at (.5, 0.5){5};
				\node  [color=blue]  at (1.5, 0.5){7}; 
				\node   [color=cyan]  at (2.5, 0.5){9}; 
				\node at (3.5, 0.5){10}; 
			\end{scope}
		
\end{tikzpicture}
\caption{  Example of the filling of a rectangle corresponding to a limit linear series (a limit $\g{3}{9}$) on a chain of 10 elliptic curves.}
\label{figex}
\end{figure}
	
For example, for the filling in \Cref{figex}, we have a chain of 10 elliptic curves $C_1\cup\cdots\cup C_{10}$ glued at points $P_i, Q_i\in C_i$ with $Q_i$  glued to $P_{i+1}$ and so  that the points $P_i$ and $Q_i$ are generic for $i=1, 4, 8, 10$. For the repeated indices $2,3,5,6,7,9$, we assume that the points $P_i$ and $Q_i$ satisfy the following torsion conditions: 
on $C_2, C_9$, ${\mathcal O}(2P_i)={\mathcal O}(2Q_i)$, and on $C_3, C_5, C_6, C_7$, ${\mathcal O}(4P_i)={\mathcal O}(4Q_i)$.

The last condition (A3) we impose on the fillings, that no index could be replaced by one of the $i_k$, guarantees that the space of limit linear series on the space of chains of elliptic curves has dimension $-\rho$ and therefore these limit linear series are actually limits of linear series of the same degree and dimension on nearby non-singular curves.

\section{Existence of theta-characteristics with sections}\label{secthch}

We would like to show that there are families of curves with a semicanonical linear series of dimension $r$ near a chain of elliptic curves.
This works for $r=2$ but, unfortunately,  not for $r\ge 3$.

\begin{figure}[h!]
	\begin{tikzpicture}[scale=.45]
		
		\begin{scope}[xshift=0cm]
			\foreach \x in {0, 1,2,3} {\draw[thick] (0,\x) -- (3, \x); }
			\foreach \x in {0, 1,2,3} {\draw[thick] (\x,0) -- ( \x,3); }
			
			\node at (.5, 2.5){1};
			\node  [color=cyan] at (1.5, 2.5){2}; 
			\node [color=cyan]  at (2.5, 2.5){3}; 
			
			\node [color=blue] at (.5, 1.5){2};
			\node  at (1.5, 1.5){4}; 
			\node [color=cyan]  at (2.5, 1.5){5}; 
			
			\node [color=blue] at (.5, 0.5){3};
			\node  [color=blue]  at (1.5, 0.5){5}; 
			\node at (2.5, 0.5){6}; 
		\end{scope}
		
		\begin{scope}[xshift=3.5cm]
			\foreach \x in {0, 1,2,3,4} {\draw[thick] (0,\x) -- (4, \x); }
			\foreach \x in {0, 1,2,3,4} {\draw[thick] (\x,0) -- ( \x,4); }
			
			\node at (.5, 3.5){1}; 
			\node [color=cyan]  at (1.5, 3.5){2}; 
			\node  [color=cyan] at (2.5, 3.5){3}; 
			\node[color=red] at (3.5, 3.5){5}; 
			
			\node [color=blue] at (.5, 2.5){2};
			\node  at (1.5, 2.5){4}; 
			\node [color=red]  at (2.5, 2.5){6}; 
			\node [color=cyan]  at (3.5, 2.5){7}; 
			
			\node [color=blue] at (.5, 1.5){3};
			\node  [color=red]  at (1.5, 1.5){6}; 
			\node at (2.5, 1.5){8}; 
			\node [color=cyan]  at (3.5, 1.5){9}; 
			
			\node [color=red] at (.5, 0.5){5};
			\node  [color=blue]  at (1.5, 0.5){7}; 
			\node [color=blue]  at (2.5, 0.5){9}; 
			\node at (3.5, 0.5){10}; 
		\end{scope}
		
		\begin{scope}[xshift=8.0cm]
			\foreach \x in {0, 1,2,3,4, 5} {\draw[thick] (0,\x) -- (5, \x); }
			\foreach \x in {0, 1,2,3,4,5} {\draw[thick] (\x,0) -- ( \x,5); }
			
			\node at (.5, 4.5){1}; 
			\node [color=cyan]  at (1.5, 4.5){2}; 
			\node  [color=cyan] at (2.5, 4.5){3}; 
			\node[color=cyan] at (3.5, 4.5){4}; 
			\node[color=cyan] at (4.5, 4.5){5}; 
			
			\node [color=blue] at (.5, 3.5){2};
			\node  at (1.5, 3.5){6}; 
			\node  [color=cyan] at (2.5, 3.5){7}; 
			\node [color=cyan]at (3.5, 3.5){8}; 
			\node[color=red] at (4.5, 3.5){10}; 
			
			\node[color=blue] at (.5, 2.5){3}; 
			\node [color=blue] at (1.5, 2.5){7}; 
			\node at (2.5, 2.5){9}; 
			\node [color=red] at (3.5, 2.5){11}; 
			\node[color=cyan] at (4.5, 2.5){12}; 
			
			\node[color=blue] at (.5, 1.5){4}; 
			\node[color=blue] at (1.5, 1.5){8}; 
			\node [color=red] at (2.5, 1.5){11}; 
			\node at (3.5, 1.5){13}; 
			\node [color=cyan]at (4.5, 1.5){14}; 
			
			\node[color=blue] at (.5, .5){5}; 
			\node[color=red] at (1.5, .5){10}; 
			\node[color=blue] at (2.5, .5){12}; 
			\node [color=blue]  at (3.5, .5){14}; 
			\node at (4.5, .5){15}; 
		\end{scope}
		
		\begin{scope}[xshift=13.7cm]
			\foreach \x in {0, 1,2,3,4, 5,6} {\draw[thick] (0,\x) -- (6, \x); }
			\foreach \x in {0, 1,2,3,4,5,6} {\draw[thick] (\x,0) -- ( \x,6); }
			
			\node at (.5, 5.5){1}; 
			\node  [color=cyan]at (1.5, 5.5){2}; 
			\node [color=cyan] at (2.5, 5.5){3}; 
			\node[color=cyan] at (3.5, 5.5){4}; 
			\node[color=cyan] at (4.5, 5.5){5}; 
			\node[color=cyan] at (5.5, 5.5){6}; 
			
			\node [color=blue] at (.5, 4.5){2}; 
			\node at (1.5, 4.5){7}; 
			\node [color=cyan]  at (2.5, 4.5){8}; 
			\node[color=cyan] at (3.5, 4.5){9}; 
			\node[color=cyan] at (4.5, 4.5){10}; 
			\node[color=cyan] at (5.5, 4.5){11}; 
			
			\node [color=blue] at (.5, 3.5){3};
			\node   [color=blue]  at (1.5, 3.5){8}; 
			\node at (2.5, 3.5){12}; 
			\node [color=cyan]at (3.5, 3.5){13}; 
			\node[color=cyan] at (4.5, 3.5){14}; 
			\node[color=red] at (5.5, 3.5){16}; 
			
			\node[color=blue] at (.5, 2.5){4}; 
			\node [color=blue] at (1.5, 2.5){9}; 
			\node [color=blue]  at (2.5, 2.5){13}; 
			\node at (3.5, 2.5){15}; 
			\node[color=red] at (4.5, 2.5){17}; 
			\node[color=cyan] at (5.5, 2.5){18}; 
			
			\node[color=blue] at (.5, 1.5){5}; 
			\node[color=blue] at (1.5, 1.5){10}; 
			\node [color=blue] at (2.5, 1.5){14}; 
			\node [color=red] at (3.5, 1.5){17}; 
			\node at (4.5, 1.5){19}; 
			\node[color=cyan] at (5.5, 1.5){20}; 
			
			\node[color=blue] at (.5, .5){6}; 
			\node[color=blue] at (1.5, .5){11}; 
			\node[color=red] at (2.5, .5){16}; 
			\node  [color=blue] at (3.5, .5){18}; 
			\node [color=blue] at (4.5, .5){20}; 
			\node at (5.5, .5){21}; 
		\end{scope}
		
		\begin{scope}[xshift=20.4cm]
			\foreach \x in {0, 1,2,3,4, 5,6,7} {\draw[thick] (0,\x) -- (7, \x); }
			\foreach \x in {0, 1,2,3,4,5,6,7} {\draw[thick] (\x,0) -- ( \x,7); }
			
			\node at (.5, 6.5){1}; 
			\node  [color=cyan]at (1.5, 6.5){2}; 
			\node [color=cyan] at (2.5, 6.5){3}; 
			\node[color=cyan] at (3.5, 6.5){4}; 
			\node[color=cyan] at (4.5, 6.5){5}; 
			\node[color=cyan] at (5.5, 6.5){6}; 
			\node[color=cyan] at (6.5, 6.5){7}; 
			
			\node  [color=blue] at (.5, 5.5){2}; 
			\node  at (1.5, 5.5){8}; 
			\node [color=cyan] at (2.5, 5.5){9}; 
			\node[color=cyan] at (3.5, 5.5){10}; 
			\node[color=cyan] at (4.5, 5.5){11}; 
			\node[color=cyan] at (5.5, 5.5){12}; 
			\node[color=cyan] at (6.5, 5.5){13}; 
			
			\node [color=blue] at (.5, 4.5){3}; 
			\node [color=blue]  at (1.5, 4.5){9}; 
			\node at (2.5, 4.5){14}; 
			\node[color=cyan] at (3.5, 4.5){15}; 
			\node[color=cyan] at (4.5, 4.5){16}; 
			\node[color=cyan] at (5.5, 4.5){17}; 
			\node[color=cyan] at (6.5, 4.5){18}; 
			
			\node [color=blue] at (.5, 3.5){4};
			\node   [color=blue]  at (1.5, 3.5){10}; 
			\node  [color=blue] at (2.5, 3.5){15}; 
			\node at (3.5, 3.5){19}; 
			\node[color=cyan] at (4.5, 3.5){20}; 
			\node[color=cyan] at (5.5, 3.5){21}; 
			\node[color=red] at (6.5, 3.5){23}; 
			
			\node[color=blue] at (.5, 2.5){5}; 
			\node [color=blue] at (1.5, 2.5){11}; 
			\node [color=blue]  at (2.5, 2.5){16}; 
			\node [color=blue] at (3.5, 2.5){20}; 
			\node at (4.5, 2.5){22}; 
			\node[color=red] at (5.5, 2.5){24}; 
			\node[color=cyan] at (6.5, 2.5){26}; 
			
			\node[color=blue] at (.5, 1.5){6}; 
			\node[color=blue] at (1.5, 1.5){12}; 
			\node [color=blue] at (2.5, 1.5){17}; 
			\node [color=blue] at (3.5, 1.5){21}; 
			\node [color=red] at (4.5, 1.5){24}; 
			\node at (5.5, 1.5){25}; 
			\node[color=cyan] at (6.5, 1.5){27}; 
			
			\node[color=blue] at (.5, .5){7}; 
			\node[color=blue] at (1.5, .5){13}; 
			\node[color=blue] at (2.5, .5){18}; 
			\node  [color=red] at (3.5, .5){23}; 
			\node [color=blue] at (4.5, .5){26}; 
			\node [color=blue] at (5.5, .5){27}; 
			\node at (6.5, .5){28}; 
			
		\end{scope}
		
		\begin{scope}[xshift=28.1cm]
			\foreach \x in {0, 1,2,3,4, 5,6,7,8} {\draw[thick] (0,\x) -- (8, \x); }
			\foreach \x in {0, 1,2,3,4,5,6,7,8} {\draw[thick] (\x,0) -- ( \x,8); }
			
			\node at (.5, 7.5){1}; 
			\node  [color=cyan]at (1.5, 7.5){2}; 
			\node [color=cyan] at (2.5, 7.5){3}; 
			\node[color=cyan] at (3.5, 7.5){4}; 
			\node[color=cyan] at (4.5, 7.5){5}; 
			\node[color=cyan] at (5.5, 7.5){6}; 
			\node[color=cyan] at (6.5, 7.5){7}; 
			\node[color=cyan] at (7.5, 7.5){8}; 
			
			\node  [color=blue] at (.5, 6.5){2}; 
			\node  at (1.5, 6.5){9}; 
			\node [color=cyan] at (2.5, 6.5){10}; 
			\node[color=cyan] at (3.5, 6.5){11}; 
			\node[color=cyan] at (4.5, 6.5){12}; 
			\node[color=cyan] at (5.5, 6.5){13}; 
			\node[color=cyan] at (6.5, 6.5){14}; 
			\node[color=cyan] at (7.5, 6.5){15}; 
			
			\node [color=blue] at (.5, 5.5){3}; 
			\node [color=blue]  at (1.5, 5.5){10}; 
			\node at (2.5, 5.5){16}; 
			\node[color=cyan] at (3.5, 5.5){17}; 
			\node[color=cyan] at (4.5, 5.5){18}; 
			\node[color=cyan] at (5.5, 5.5){19}; 
			\node[color=cyan] at (6.5, 5.5){20}; 
			\node[color=cyan] at (7.5, 5.5){21}; 
			
			\node [color=blue] at (.5, 4.5){4};
			\node   [color=blue]  at (1.5, 4.5){11}; 
			\node  [color=blue] at (2.5, 4.5){17}; 
			\node at (3.5, 4.5){22}; 
			\node[color=cyan] at (4.5, 4.5){23}; 
			\node[color=cyan] at (5.5, 4.5){24}; 
			\node[color=cyan] at (6.5, 4.5){25}; 
			\node[color=cyan] at (7.5, 4.5){26}; 
			
			\node[color=blue] at (.5, 3.5){5}; 
			\node [color=blue] at (1.5, 3.5){12}; 
			\node [color=blue]  at (2.5, 3.5){18}; 
			\node [color=blue] at (3.5, 3.5){23}; 
			\node at (4.5, 3.5){27}; 
			\node[color=cyan] at (5.5, 3.5){28}; 
			\node[color=cyan] at (6.5, 3.5){29}; 
			\node[color=red] at (7.5, 3.5){32}; 
			
			\node[color=blue] at (.5, 2.5){6}; 
			\node[color=blue] at (1.5, 2.5){13}; 
			\node [color=blue] at (2.5, 2.5){19}; 
			\node [color=blue] at (3.5, 2.5){24}; 
			\node [color=blue] at (4.5, 2.5){28}; 
			\node at (5.5, 2.5){30}; 
			\node[color=red] at (6.5, 2.5){31}; 
			\node[color=cyan] at (7.5, 2.5){33}; 
			
			\node[color=blue] at (.5, 1.5){7}; 
			\node[color=blue] at (1.5, 1.5){14}; 
			\node[color=blue] at (2.5, 1.5){20}; 
			\node  [color=blue] at (3.5, 1.5){25}; 
			\node [color=blue] at (4.5, 1.5){29}; 
			\node [color=red] at (5.5, 1.5){32}; 
			\node at (6.5, 1.5){34}; 
			\node[color=cyan] at (7.5, 1.5){35}; 
			
			\node[color=blue] at (.5, .5){8}; 
			\node[color=blue] at (1.5, .5){15}; 
			\node[color=blue] at (2.5, .5){21}; 
			\node  [color=blue] at (3.5, .5){26}; 
			\node [color=red] at (4.5, .5){31}; 
			\node [color=blue] at (5.5, .5){33}; 
			\node [color=blue] at (6.5, .5){35}; 
			\node at (7.5, .5){36}; 
			\end{scope}

\end{tikzpicture}
\caption{Naive fillings of the rectangle when $r=2, 3, 4,5,6, 7$. }
\label{figsqtc}
\end{figure}

As we discussed above, limit linear series on chains of elliptic curves correspond to admissible fillings of an $(r+1)\times (g-d+r)$ rectangle.
For $d=g-1$, the rectangle becomes a square.
The Serre dual linear series corresponds to the transpose  $(g-d+r)\times (r+1)$ rectangle, 
in particular, the linear series is half canonical if the filling of the square is symmetric with respect to the diagonal.

One way to naively construct such a half-canonical filling is to fill each row  and column consecutively so that the filling is symmetric. 
These naive fillings will never amount to an admissible filling: when there are two squares filled with the same number $x$ on either side of the main diagonal that touch at one corner (thus specifying $2$-torsion),
the squares of that antidiagonal must also be filled with $x$ (as they specify even torsion) unless there is a square below or to the right that has already been filled with a number $<x$. 
One can focus this problem to only occur for two consecutive entries, as shown in \Cref{figsqtc}, 
where the entries that make the filling not admissible are highlighted in red and should be replaced with the entry specifying the $2$-torsion. 
These fillings match our construction in \Cref{exthch}.

For example, when $r=4$, in \Cref{figsqtc}, the $4$'s cannot be replaced with $7$'s because there is a $5$ under and below the two $4$'s and $5<7$.
On the other hand, the 10 could be replaced with an $11$ as there are no entries smaller than $11$ to the right or below the $11$.
However, while replacing the $10$'s by $11$'s gives an admissible filling, it also removes any torsion conditions on the tenth elliptic curve, 
allowing us to take arbitrary points as nodes on the tenth elliptic curve and still obtain a half-canonical linear series on a chain of 15 elliptic curves with 5 sections. 
Therefore, the codimension of the set of curves of genus 15 with a half-canonical limit linear series of dimension 4 inside the elliptic chains is 9 instead of the ${4+1\choose 2}=10$ expected.
Thus we have no guarantee that this half-canonical series will smooth to a non-singular curve.
 
Our approach is to replace the two (consecutive) elliptic curves corresponding to the entries of this antidiagonal by a curve of genus $2$, 
with Weierstrass points as nodes, thus eliminating this larger family of limit linear series on the chain of curves associated to these naive fillings.

\begin{theorem}\label[theorem]{exthch} If $g\ge {r+2\choose 2}$, then $T^r_g$ is non-empty 
and has one component of the expected codimension $ {r+1\choose 2}$ in ${\mathcal M}_g$.
The kernel of the Petri map for a generic point on this component has dimension  $ {r+1\choose 2}$.
\end{theorem}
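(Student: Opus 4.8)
The plan is to realize a component of $T^r_g$ as the smoothing of a reducible curve $X_0$ that is a chain of elliptic curves in which one pair of consecutive elliptic components has been replaced by a genus-$2$ curve glued along two of its Weierstrass points, and which carries a half-canonical limit $g^r_{g-1}$ of dimension $r$. Recall that any theta-characteristic has degree $g-1$, so a theta-characteristic with $r+1$ sections is a $g^r_{g-1}$, and that for $d=g-1$ the rectangle of \Cref{sec:Background} is the $(r+1)\times(r+1)$ square, transpose-symmetry of the filling corresponding via Serre duality to the half-canonical property. The genus-$2$ insertion is the device that forces the family of such curves to have codimension exactly ${r+1\choose 2}$, rather than the too-small codimension produced by a bare elliptic chain. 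I would treat the borderline case $g={r+2\choose 2}$ --- where $-\rho(g,r,g-1)={r+1\choose 2}$ --- in full and then reach all larger $g$ by attaching general elliptic tails.

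First I would pin down the combinatorics. Starting from the naive transpose-symmetric filling of \Cref{figsqtc}, whose hooks are filled by consecutive blocks of integers, I would locate the anti-diagonal carrying two off-diagonal cells that meet at a corner of the main diagonal with a common index $x$ not forced, by the admissibility rules (A1)--(A3), to propagate along that anti-diagonal; as explained before the statement, this is precisely where the naive filling fails. (For $r=2$ the naive symmetric filling of \Cref{figsqtc} is already admissible, $X_0$ is a bare chain of six elliptic curves, and the modification below is needed only for $r\ge 3$.) Let $C_i, C_{i+1}$ be the two consecutive elliptic components indexed by that pair, and replace $C_i\cup_{Q_i\sim P_{i+1}}C_{i+1}$ by a genus-$2$ curve $Y$ glued to $C_{i-1}$ and $C_{i+2}$ along two Weierstrass points of $Y$, so that $X_0$ has arithmetic genus $g$. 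From the appropriately modified symmetric filling I would build a refined limit $g^r_{g-1}$ on $X_0$ whose restriction to each elliptic component is a theta-characteristic of that component and whose restriction to $Y$ is an odd theta-characteristic $\mathcal O_Y(W)$, so that the limit series is half-canonical; the node-by-node compatibility of vanishing sequences follows from admissibility of the modified filling, the point of using Weierstrass points on $Y$ being that the even-torsion compatibility at the two nodes on $Y$ then holds automatically and, unlike the ``replace $10$ by $11$'' move of the discussion above, without any accompanying family of limit series that forgets it.

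Next comes the dimension count and the smoothing. Let $\mathcal Z\subset\overline{\mathcal M_g}$ be the locus of curves of the above combinatorial type whose elliptic nodes satisfy precisely the torsion conditions imposed by the repeated indices of the modified filling; every point of $\mathcal Z$ carries a half-canonical limit $g^r_{g-1}$. Counting moduli component by component and subtracting the torsion conditions --- here one checks that the genus-$2$ curve with its two marked Weierstrass points contributes one modulus fewer than the two elliptic components and node it replaces, while absorbing the two repeated indices that the naive filling wastes --- one finds that, after smoothing the $g-2$ nodes of $X_0$, one obtains a family of \emph{smooth} curves of dimension exactly $3g-3-{r+1\choose 2}$, each carrying a half-canonical $g^r_{g-1}$ and mapping generically finitely to $\mathcal M_g$; the smoothing itself is the criterion of \cite[Theorem~2.1]{DBN}, in a version permitting a genus-$2$ component glued at Weierstrass points, which applies because the construction has been arranged to have exactly the expected codimension. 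Thus $T^r_g\neq\emptyset$ and $\overline{T^r_g}$ contains an irreducible subvariety of codimension ${r+1\choose 2}$; combined with Harris's bound $\codim T^r_g\le{r+1\choose 2}$ from \cite{HTheta}, it remains to check that $T^r_g$ is not strictly larger along this family, after which one reaches $g>{r+2\choose 2}$ by attaching general elliptic tails --- each tail keeps the bundle half-canonical, preserves the Petri condition below, and raises $\dim\mathcal M_g$ and $\dim T^r_g$ each by $3$, so the codimension stays ${r+1\choose 2}$.

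The last step, which also yields the final assertion of the theorem, is to compute the symmetrized Petri map $\operatorname{Sym}^2 H^0(C,L)\to H^0(C,\omega_C)$ of the half-canonical bundle $L$ on a general member of the family by degenerating to $X_0$ and reading it off the modified filling component by component: the repeated indices on the elliptic components together with the rigid genus-$2$ block should pin the sections down tightly enough to force injectivity, so that $\dim\ker\mu=\dim\wedge^2 H^0(C,L)={r+1\choose 2}$ by semicontinuity. This injectivity is precisely the condition under which the deformation theory of theta-characteristics (cf.~\cite{HTheta}) shows $T^r_g$ to be smooth of codimension exactly ${r+1\choose 2}$ at $C$; hence the family of the previous paragraph is dense in a genuine component of the expected codimension, which also finishes the dimension claim. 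I expect the real difficulty to be twofold and intertwined: making the limit-linear-series smoothing theory pass cleanly through a genus-$2$ component glued at Weierstrass points --- one must check that the ``codimension $=-\rho$'' transversality survives there and that no extra limit $g^r_{g-1}$ lurks on $X_0$ --- and carrying out the degeneration computation that establishes injectivity of the symmetrized Petri map, since that is what upgrades ``a subvariety of the right dimension'' into ``a component of the expected codimension.''
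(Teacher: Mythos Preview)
Your proposal is correct and follows essentially the same approach as the paper: replace the two consecutive elliptic components indexed by the problematic anti-diagonal with a genus-$2$ curve glued at Weierstrass points, verify the resulting half-canonical limit $g^r_{g-1}$ has the expected codimension, smooth via \cite[Theorem~2.1]{DBN}, compute the Petri map by degeneration, and extend to larger $g$ by appending elliptic tails. The paper fills in exactly the details you flag as needing work---the explicit line bundle on the genus-$2$ curve (which up to node-twists is $\mathcal O(R)$ for a \emph{third} Weierstrass point $R$, forcing $2P\equiv 2Q$ and hence two conditions on $(D,P,Q)$), and the Petri independence via the methods of \cite{LOTZ1,LOTZ2}---and for $g>g_0$ equips the extra elliptic tails with nontrivial $2$-torsion line bundles rather than leaving them entirely general.
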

\begin{proof}  We first consider the case  $g= {r+2\choose 2}$.
We construct a filling of the $(r+1)\times(g-d+r)$ rectangle for a semicanonical linear series of dimension $r$.
	
We fill the rectangle matching the elements on the first row with those on the first column, then the remaining of the second row matching with the remaining of the second column 
and so on  with one exception:
Instead of completing row and column $r-2$, we leave the last spot of each of them empty and fill first the spot $(r-1,r-1)$.
 Then we  resume the filling of the last triangle on the right bottom corner (see \Cref{figsqtc}).
The only problematic components in such a filling are those occupying the spots $(r-2,r+1), (r-1, r), (r,r-1), (r+1,r-2)$ which correspond to two consecutive elliptic curves (see \Cref{figsqthc2}).
We will replace this pair of elliptic curves with a single curve $D$ of genus two with a point  $P\in D$ glued to $Q_{g-6}\in C_{g-6}$ in the chain 
and  a point $Q\in D$ glued to  $P_{g-3}\in C_{g-3}$.

\begin{figure}[h!]
\begin{tikzpicture}[scale=.45]

\begin{scope}[xshift=0cm]
\foreach \x in {0, 1,2,3} {\draw[thick] (0,\x) -- (3, \x); }
\foreach \x in {0, 1,2,3} {\draw[thick] (\x,0) -- ( \x,3); }

\node at (.5, 2.5){1};
\node  [color=cyan] at (1.5, 2.5){2}; 
\node [color=cyan]  at (2.5, 2.5){3}; 

\node [color=blue] at (.5, 1.5){2};
\node  at (1.5, 1.5){4}; 
\node [color=cyan]  at (2.5, 1.5){5}; 

\node [color=blue] at (.5, 0.5){3};
\node  [color=blue]  at (1.5, 0.5){5}; 
\node at (2.5, 0.5){6}; 
\end{scope}

\begin{scope}[xshift=3.6cm]
\foreach \x in {0, 1,2,3,4} {\draw[thick] (0,\x) -- (4, \x); }
\foreach \x in {0, 1,2,3,4} {\draw[thick] (\x,0) -- ( \x,4); }

\node at (.5, 3.5){1}; 
\node [color=cyan]  at (1.5, 3.5){2}; 
\node  [color=cyan] at (2.5, 3.5){3}; 
\node[color=cyan] at (3.5, 3.5){5}; 

\node [color=blue] at (.5, 2.5){2};
\node  at (1.5, 2.5){4}; 
\node [color=cyan]  at (2.5, 2.5){6}; 
\node [color=cyan]  at (3.5, 2.5){7}; 

\node [color=blue] at (.5, 1.5){3};
\node  [color=blue]  at (1.5, 1.5){6}; 
\node at (2.5, 1.5){8}; 
\node [color=cyan]  at (3.5, 1.5){9};

\node [color=blue] at (.5, 0.5){5};
\node  [color=blue]  at (1.5, 0.5){7}; 
\node [color=blue]  at (2.5, 0.5){9}; 
\node at (3.5, 0.5){10}; 
\fill [color=gray](0,0)--(1,0)--(1,1)--(2,1)--(2,2)--(3,2)--(3,3)--(4,3)--(4,4)--(3,4)--(3,3)--(2,3)--(2,2)--(1,2)--(1,1)--(0,1)--(0,0);
\end{scope}

\begin{scope}[xshift=8.2cm]
\foreach \x in {0, 1,2,3,4, 5} {\draw[thick] (0,\x) -- (5, \x); }
\foreach \x in {0, 1,2,3,4,5} {\draw[thick] (\x,0) -- ( \x,5); }

\node at (.5, 4.5){1}; 
\node [color=cyan]  at (1.5, 4.5){2}; 
\node  [color=cyan] at (2.5, 4.5){3}; 
\node[color=cyan] at (3.5, 4.5){4}; 
\node[color=cyan] at (4.5, 4.5){5}; 

\node [color=blue] at (.5, 3.5){2};
\node  at (1.5, 3.5){6}; 
\node  [color=cyan] at (2.5, 3.5){7}; 
\node [color=cyan]at (3.5, 3.5){8}; 
\node[color=cyan] at (4.5, 3.5){10}; 

\node[color=blue] at (.5, 2.5){3}; 
\node [color=blue] at (1.5, 2.5){7}; 
\node at (2.5, 2.5){9}; 
\node [color=cyan] at (3.5, 2.5){11}; 
\node[color=cyan] at (4.5, 2.5){12}; 

\node[color=blue] at (.5, 1.5){4}; 
\node[color=blue] at (1.5, 1.5){8}; 
\node [color=blue] at (2.5, 1.5){11}; 
\node at (3.5, 1.5){13}; 
\node [color=cyan]at (4.5, 1.5){14}; 

\node[color=blue] at (.5, .5){5}; 
\node[color=blue] at (1.5, .5){10}; 
\node[color=blue] at (2.5, .5){12}; 
\node [color=blue]  at (3.5, .5){14}; 
\node at (4.5, .5){15}; 
\fill [color=gray](1,0)--(2,0)--(2,1)--(3,1)--(3,2)--(4,2)--(4,3)--(5,3)--(5,4)--(4,4)--(4,3)--(3,3)--(3,2)--(2,2)--(2,1)--(1,1)--(1,0);
\end{scope}

\begin{scope}[xshift=13.8cm]
\foreach \x in {0, 1,2,3,4, 5,6} {\draw[thick] (0,\x) -- (6, \x); }
\foreach \x in {0, 1,2,3,4,5,6} {\draw[thick] (\x,0) -- ( \x,6); }

\node at (.5, 5.5){1}; 
\node  [color=cyan]at (1.5, 5.5){2}; 
\node [color=cyan] at (2.5, 5.5){3}; 
\node[color=cyan] at (3.5, 5.5){4}; 
\node[color=cyan] at (4.5, 5.5){5}; 
\node[color=cyan] at (5.5, 5.5){6}; 

\node [color=blue] at (.5, 4.5){2}; 
\node at (1.5, 4.5){7}; 
\node [color=cyan]  at (2.5, 4.5){8}; 
\node[color=cyan] at (3.5, 4.5){9}; 
\node[color=cyan] at (4.5, 4.5){10}; 
\node[color=cyan] at (5.5, 4.5){11}; 

\node [color=blue] at (.5, 3.5){3};
\node   [color=blue]  at (1.5, 3.5){8}; 
\node at (2.5, 3.5){12}; 
\node [color=cyan]at (3.5, 3.5){13}; 
\node[color=cyan] at (4.5, 3.5){14}; 
\node[color=cyan] at (5.5, 3.5){16}; 

\node[color=blue] at (.5, 2.5){4}; 
\node [color=blue] at (1.5, 2.5){9}; 
\node [color=blue]  at (2.5, 2.5){13}; 
\node at (3.5, 2.5){15}; 
\node[color=cyan] at (4.5, 2.5){17}; 
\node[color=cyan] at (5.5, 2.5){18}; 

\node[color=blue] at (.5, 1.5){5}; 
\node[color=blue] at (1.5, 1.5){10}; 
\node [color=blue] at (2.5, 1.5){14}; 
\node [color=blue] at (3.5, 1.5){17}; 
\node at (4.5, 1.5){19}; 
\node[color=cyan] at (5.5, 1.5){20}; 

\node[color=blue] at (.5, .5){6}; 
\node[color=blue] at (1.5, .5){11}; 
\node[color=blue] at (2.5, .5){16}; 
\node  [color=blue] at (3.5, .5){18}; 
\node [color=blue] at (4.5, .5){20}; 
\node at (5.5, .5){21}; 
\fill [color=gray](2,0)--(3,0)--(3,1)--(4,1)--(4,2)--(5,2)--(5,3)--(6,3)--(6,4)--(5,4)--(5,3)--(4,3)--(4,2)--(3,2)--(3,1)--(2,1)--(2,0);

\end{scope}

\begin{scope}[xshift=20.5cm]
\foreach \x in {0, 1,2,3,4, 5,6,7} {\draw[thick] (0,\x) -- (7, \x); }
\foreach \x in {0, 1,2,3,4,5,6,7} {\draw[thick] (\x,0) -- ( \x,7); }

\node at (.5, 6.5){1}; 
\node  [color=cyan]at (1.5, 6.5){2}; 
\node [color=cyan] at (2.5, 6.5){3}; 
\node[color=cyan] at (3.5, 6.5){4}; 
\node[color=cyan] at (4.5, 6.5){5}; 
\node[color=cyan] at (5.5, 6.5){6}; 
\node[color=cyan] at (6.5, 6.5){7}; 

\node  [color=blue] at (.5, 5.5){2}; 
\node  at (1.5, 5.5){8}; 
\node [color=cyan] at (2.5, 5.5){9}; 
\node[color=cyan] at (3.5, 5.5){10}; 
\node[color=cyan] at (4.5, 5.5){11}; 
\node[color=cyan] at (5.5, 5.5){12}; 
\node[color=cyan] at (6.5, 5.5){13}; 

\node [color=blue] at (.5, 4.5){3}; 
\node [color=blue]  at (1.5, 4.5){9}; 
\node at (2.5, 4.5){14}; 
\node[color=cyan] at (3.5, 4.5){15}; 
\node[color=cyan] at (4.5, 4.5){16}; 
\node[color=cyan] at (5.5, 4.5){17}; 
\node[color=cyan] at (6.5, 4.5){18}; 

\node [color=blue] at (.5, 3.5){4};
\node   [color=blue]  at (1.5, 3.5){10}; 
\node  [color=blue] at (2.5, 3.5){15}; 
\node at (3.5, 3.5){19}; 
\node[color=cyan] at (4.5, 3.5){20}; 
\node[color=cyan] at (5.5, 3.5){21}; 
\node[color=cyan] at (6.5, 3.5){23}; 

\node[color=blue] at (.5, 2.5){5}; 
\node [color=blue] at (1.5, 2.5){11}; 
\node [color=blue]  at (2.5, 2.5){16}; 
\node [color=blue] at (3.5, 2.5){20}; 
\node at (4.5, 2.5){22}; 
\node[color=cyan] at (5.5, 2.5){24}; 
\node[color=cyan] at (6.5, 2.5){26}; 

\node[color=blue] at (.5, 1.5){6}; 
\node[color=blue] at (1.5, 1.5){12}; 
\node [color=blue] at (2.5, 1.5){17}; 
\node [color=blue] at (3.5, 1.5){21}; 
\node [color=blue] at (4.5, 1.5){24}; 
\node at (5.5, 1.5){25}; 
\node[color=cyan] at (6.5, 1.5){27}; 

\node[color=blue] at (.5, .5){7}; 
\node[color=blue] at (1.5, .5){13}; 
\node[color=blue] at (2.5, .5){18}; 
\node  [color=blue] at (3.5, .5){23}; 
\node [color=blue] at (4.5, .5){26}; 
\node [color=blue] at (5.5, .5){27}; 
\node at (6.5, .5){28}; 
\fill [color=gray](3,0)--(4,0)--(4,1)--(5,1)--(5,2)--(6,2)--(6,3)--(7,3)--(7,4)--(6,4)--(6,3)--(5,3)--(5,2)--(4,2)--(4,1)--(3,1)--(3,0);

\end{scope}

\begin{scope}[xshift=28.4cm]
\foreach \x in {0, 1,2,3,4, 5,6,7,8} {\draw[thick] (0,\x) -- (8, \x); }
\foreach \x in {0, 1,2,3,4,5,6,7,8} {\draw[thick] (\x,0) -- ( \x,8); }

\node at (.5, 7.5){1}; 
\node  [color=cyan]at (1.5, 7.5){2}; 
\node [color=cyan] at (2.5, 7.5){3}; 
\node[color=cyan] at (3.5, 7.5){4}; 
\node[color=cyan] at (4.5, 7.5){5}; 
\node[color=cyan] at (5.5, 7.5){6}; 
\node[color=cyan] at (6.5, 7.5){7}; 
\node[color=cyan] at (7.5, 7.5){8}; 

\node  [color=blue] at (.5, 6.5){2}; 
\node  at (1.5, 6.5){9}; 
\node [color=cyan] at (2.5, 6.5){10}; 
\node[color=cyan] at (3.5, 6.5){11}; 
\node[color=cyan] at (4.5, 6.5){12}; 
\node[color=cyan] at (5.5, 6.5){13}; 
\node[color=cyan] at (6.5, 6.5){14}; 
\node[color=cyan] at (7.5, 6.5){15}; 

\node [color=blue] at (.5, 5.5){3}; 
\node [color=blue]  at (1.5, 5.5){10}; 
\node at (2.5, 5.5){16}; 
\node[color=cyan] at (3.5, 5.5){17}; 
\node[color=cyan] at (4.5, 5.5){18}; 
\node[color=cyan] at (5.5, 5.5){19}; 
\node[color=cyan] at (6.5, 5.5){20}; 
\node[color=cyan] at (7.5, 5.5){21}; 

\node [color=blue] at (.5, 4.5){4};
\node   [color=blue]  at (1.5, 4.5){11}; 
\node  [color=blue] at (2.5, 4.5){17}; 
\node at (3.5, 4.5){22}; 
\node[color=cyan] at (4.5, 4.5){23}; 
\node[color=cyan] at (5.5, 4.5){24}; 
\node[color=cyan] at (6.5, 4.5){25}; 
\node[color=cyan] at (7.5, 4.5){26}; 

\node[color=blue] at (.5, 3.5){5}; 
\node [color=blue] at (1.5, 3.5){12}; 
\node [color=blue]  at (2.5, 3.5){18}; 
\node [color=blue] at (3.5, 3.5){23}; 
\node at (4.5, 3.5){27}; 
\node[color=cyan] at (5.5, 3.5){28}; 
\node[color=cyan] at (6.5, 3.5){29}; 
\node[color=cyan] at (7.5, 3.5){32}; 

\node[color=blue] at (.5, 2.5){6}; 
\node[color=blue] at (1.5, 2.5){13}; 
\node [color=blue] at (2.5, 2.5){19}; 
\node [color=blue] at (3.5, 2.5){24}; 
\node [color=blue] at (4.5, 2.5){28}; 
\node at (5.5, 2.5){30}; 
\node[color=cyan] at (6.5, 2.5){31}; 
\node[color=cyan] at (7.5, 2.5){33}; 

\node[color=blue] at (.5, 1.5){7}; 
\node[color=blue] at (1.5, 1.5){14}; 
\node[color=blue] at (2.5, 1.5){20}; 
\node  [color=blue] at (3.5, 1.5){25}; 
\node [color=blue] at (4.5, 1.5){29}; 
\node [color=blue] at (5.5, 1.5){32}; 
\node at (6.5, 1.5){34}; 
\node[color=cyan] at (7.5, 1.5){35}; 

\node[color=blue] at (.5, .5){8}; 
\node[color=blue] at (1.5, .5){15}; 
\node[color=blue] at (2.5, .5){21}; 
\node  [color=blue] at (3.5, .5){26}; 
\node [color=blue] at (4.5, .5){31}; 
\node [color=blue] at (5.5, .5){33}; 
\node [color=blue] at (6.5, .5){35}; 
\node at (7.5, .5){36}; 
\fill [color=gray](4,0)--(5,0)--(5,1)--(6,1)--(6,2)--(7,2)--(7,3)--(8,3)--(8,4)--(7,4)--(7,3)--(6,3)--(6,2)--(5,2)--(5,1)--(4,1)--(4,0);
\end{scope}

\end{tikzpicture}
\caption{  Replacing a pair of elliptic curves with a curve of genus two. }
\label{figsqthc2}
\end{figure}

To obtain a half-canonical  limit linear series on the new chain, we need a line bundle $L$ on $D$ of degree $g-1$ such that $L^2=K_D(2(g-6)P+8Q)$ and an 
$(r+1) $-dimensional space of sections  with vanishing at $P, Q$ respectively
\[ \begin{pmatrix} g-r-7&g-r-6&\dots&g-12&g-11&g-9&g-7&g-5&g-3\\r+4&r+3&\dots&9&8&7&5&3&1\end{pmatrix}\]
In particular, we need sections $s_1, s_3, s_5, s_7$ such that $s_i$ vanishes at $Q$ with order at least $i$ and at $P$ with order at least $g-i-2$.
There must exist points $R_1, R_3, R_5, R_7$ such that 
\[ L={\mathcal O}(R_1+Q+(g-3)P)={\mathcal O}(R_3+3Q+(g-5)P)={\mathcal O}(R_5+5Q+(g-7)P)={\mathcal O}(R_7+7Q+(g-9)P)\]
Then,
\[ 2P-2Q\equiv R_3-R_1\equiv R_5-R_3\equiv R_7-R_5\]
so that 
\[  2R_3\equiv R_5+R_1, 2R_5\equiv R_3+R_7\]
Then, either $R_3=R_1=R_5$ or  $R_3$ is a Weierstrass point and  $R_1,R_5$ are conjugate under the hyperelliptic involution. 
 Similarly, either  $R_5=R_3=R_7$ or  $R_5$ is a Weierstrass and $R_3,R_7$ are conjugate under the hyperelliptic involution. 
On the other hand, from $L^2=K_D(2(g-6)P+8Q)$, we obtain 
\[ K_D={\mathcal O}(R_1+R_7)={\mathcal O}(R_3+R_5)\]
which implies that $R_1, R_7$ are conjugate in the hyperelliptic involution and so are $R_3, R_5$.
Therefore, all the $R_i$ are identical and are  a Weierstrass point.
Then, $ 2P\equiv 2Q$, so that $P, Q$ are also Weierstrass points.
This imposes two conditions on the pointed curve $(D, P, Q)$, and gives the expected codimension on the chain of $g-2$ elliptic curves and $D$.

Conversely, given  chains of $g-6$  and 4 elliptic curves with torsion conditions between the nodes as determined by the filling and a curve $D$ of genus 2 with 2 Weierstrass points $P, Q$,
we can form a chain of genus $g$ attaching $D$ by $P$ to the $(g-6)$ chain and by $Q$ to the 4-chain.
Take on the elliptic curves  the line bundles and spaces of sections determined by the fillings. 
On $D$, choose a Weierstrass point $R$ different from $P, Q$, and take the line bundle 
\[ L={\mathcal O}(R+Q+(g-3)P)={\mathcal O}(R+3Q+(g-5)P)={\mathcal O}(R+5Q+(g-7)P)={\mathcal O}(R+7Q+(g-9)P)\]
Then, this line bundle has an $(r+1)$-dimensional space of sections with the vanishing sequence listed above namely the subspace of sections vanishing with order $g-r-7$ at $P$ and 8 at $Q$
together with 4 copies of the $g^1_2$:
\[ [H^0(D, L(-(g-r-7)P-8Q))+((g-r-7)P+8Q)]\oplus [4g^1_2+((g-9)P+Q)].\] 

The surjectivity of the Petri map is proved using the methods in \cite{LOTZ1}, \cite{LOTZ2} and is similar to the proof of \cite[Proposition~1.8]{Ramif} and in \cite[Theorem~2.1]{DBN}. 
One constructs limit sections of the given linear series $s_0,\dots, s_r$ and of the Serre dual $t_0,\dots t_r$. 
One then shows that the limit sections $s_it_j, i\le j$ are independent limit sections of the canonical limit linear series.
In order to prove the independence of these sections, one considers a distribution of the total degree $2g-2$ among the $g$ components. 
In  \cite{Ramif} and \cite{DBN}, the author chose degree 1 on the first and last components and degree 2 on the remaining ones.
Here, due to the presence of the genus 2 curve replacing the elliptic curves $C_{g-5},C_{g-4}$, we choose degree 1 on the curves $C_1, C_g$, 
degree 4 on $C_{g-3}$  and degree 2 on the remaining components. 

\medskip

If  $g> {r+2\choose 2}$, define $g_0=  {r+2\choose 2}$. Consider a chain of $g_0-2$ elliptic curves and one curve of genus 2 as in the above construction.
Denote by $(L_i, V_i)$ the corresponding line bundles and spaces of sections.
Glue $Q_{g_0}$ to a generic chain of $g-g_0$ elliptic curves. 
We now describe a semicanonical limit linear series with $r+1$ sections on the whole chain of $g$ elliptic curves. 
On the curves $C_i,\ i=1\dots, g_0$, take the line bundles and spaces of sections $(L_i((g-g_0)Q_i), V_i +((g-g_0)Q_i))$.
On the curves $C_i,\ i= g_0, \dots,g$, choose a non-trivial line bundle  $L_i$  of order 2 and take the line bundle $L_i(( i-1)P_i+(g-i)Q_i)$.
Take the space of sections of $L_i(( i-1)P_i+(g-i)Q_i)$ with vanishing at $P_i$ being $(i-r-2,  i-r-1,\dots, i-3, i-2)$ and at $Q_i$ being $(g-i+r,  g-i+r-1,\dots, g-i+1, g-i)$.
\end{proof}

\begin{defn}
	We will write $ET^r_g$ to denote the component of $T^r_g$ constructed in \Cref{exthch}.
\end{defn}

\section{Reducible Brill--Noether Loci}

We construct two different families of curves of genus $g$ possessing a $g^r_{g-1}$ so that for one family the $g^r_{g-1}$ is half canonical, and for the other it is not. We first set some notation.

For $r\geq 3$ and $g={r+2 \choose 2}$, we give asymmetric admissible fillings of a $(r+1)\times (r+1)$ square. 
These fillings are obtained by filling the square by South-West to North-East diagonals so that the second half of each diagonal repeats the entries of the first half.
 The first few fillings are given in \Cref{fig_Nfilling}. 
 These fillings are admissible.
 Both rows and columns are in increasing order. 
 Repeated indices are at even distance and there is no third empty spot on that diagonal at the same distance as the pair that have the same index. 
 The only other empty spots when placing a new index are on the next diagonal that are at odd distance from the spots in the previous one.
 Moreover the construction can be made symmetric if we were to start with the index $g$ and go down to $1$, filling from the bottom right corner instead. 
 Therefore, it is also not possible to replace one index already in the table with a larger one to still obtain an admissible filling.

\begin{figure}[h!]
	\begin{tikzpicture}[scale=.45]
		
		\begin{scope}[ ]
			\foreach \x in {0, 1,2,3,4} {\draw[thick] (0,\x) -- (4, \x); }
			\foreach \x in {0, 1,2,3,4} {\draw[thick] (\x,0) -- ( \x,4); }
			
			\node at (.5, 3.5){1}; 
			\node [color=cyan]  at (1.5, 3.5){2}; 
			\node  [color=cyan] at (2.5, 3.5){3}; 
			\node[color=cyan] at (3.5, 3.5){6}; 
			
			\node [color=blue] at (.5, 2.5){2};
			\node at (1.5, 2.5){4}; 
			\node [color=cyan]  at (2.5, 2.5){5}; 
			\node [color=cyan]  at (3.5, 2.5){7}; 
			
			\node [color=blue] at (.5, 1.5){3};
			\node  [color=blue]  at (1.5, 1.5){6}; 
			\node   at (2.5, 1.5){8}; 
			\node [color=cyan]  at (3.5, 1.5){9}; 
			
			\node [color=blue] at (.5, 0.5){5};
			\node  [color=blue]  at (1.5, 0.5){7}; 
			\node   [color=cyan]  at (2.5, 0.5){9}; 
			\node at (3.5, 0.5){10}; 
		\end{scope}
		
		\begin{scope}[xshift=5cm]
			\foreach \x in {0, 1,2,3,4, 5} {\draw[thick] (0,\x) -- (5, \x); }
			\foreach \x in {0, 1,2,3,4,5} {\draw[thick] (\x,0) -- ( \x,5); }
			
			\node at (.5, 4.5){1}; 
			\node  [color=cyan]  at (1.5, 4.5){2}; 
			\node  [color=cyan] at (2.5, 4.5){3}; 
			\node[color=cyan] at (3.5, 4.5){6}; 
			\node[color=cyan] at (4.5, 4.5){8}; 
			
			\node [color=blue] at (.5, 3.5){2};
			\node    at (1.5, 3.5){4}; 
			\node [color=cyan] at (2.5, 3.5){5}; 
			\node [color=cyan] at (3.5, 3.5){7}; 
			\node[color=cyan] at (4.5, 3.5){11}; 
			
			\node[color=blue] at (.5, 2.5){3}; 
			\node [color=blue] at (1.5, 2.5){6}; 
			\node      at (2.5, 2.5){9}; 
			\node [color=cyan] at (3.5, 2.5){10}; 
			\node[color=cyan] at (4.5, 2.5){12}; 
			
			\node[color=blue] at (.5, 1.5){5}; 
			\node[color=blue] at (1.5, 1.5){8}; 
			\node   [color=blue] at (2.5, 1.5){11}; 
			\node   at (3.5, 1.5){13}; 
			\node [color=cyan]at (4.5, 1.5){14}; 
			
			\node[color=blue] at (.5, .5){7}; 
			\node[color=blue] at (1.5, .5){10}; 
			\node [color=blue] at (2.5, .5){12}; 
			\node   [color=blue]  at (3.5, .5){14}; 
			\node at (4.5, .5){15}; 
		\end{scope}

		\begin{scope}[xshift=11cm]
			\foreach \x in {0, 1,2,3,4, 5,6} {\draw[thick] (0,\x) -- (6, \x); }
			\foreach \x in {0, 1,2,3,4,5,6} {\draw[thick] (\x,0) -- ( \x,6); }
			
			\node at (.5, 5.5){1}; 
			\node  [color=cyan]at (1.5, 5.5){2}; 
			\node [color=cyan] at (2.5, 5.5){3}; 
			\node[color=cyan] at (3.5, 5.5){6}; 
			\node[color=cyan] at (4.5, 5.5){8}; 
			\node[color=cyan] at (5.5, 5.5){12}; 
			
			\node [color=blue] at (.5, 4.5){2}; 
			\node at (1.5, 4.5){4}; 
			\node [color=cyan]  at (2.5, 4.5){5}; 
			\node[color=cyan] at (3.5, 4.5){7}; 
			\node[color=cyan] at (4.5, 4.5){11}; 
			\node[color=cyan] at (5.5, 4.5){14}; 
			
			\node [color=blue] at (.5, 3.5){3};
			\node   [color=blue]  at (1.5, 3.5){6}; 
			\node at (2.5, 3.5){9}; 
			\node [color=cyan]at (3.5, 3.5){10}; 
			\node[color=cyan] at (4.5, 3.5){13}; 
			\node[color=cyan] at (5.5, 3.5){17}; 
			
			\node[color=blue] at (.5, 2.5){5}; 
			\node [color=blue] at (1.5, 2.5){8}; 
			\node [color=blue]  at (2.5, 2.5){12}; 
			\node at (3.5, 2.5){15}; 
			\node[color=cyan] at (4.5, 2.5){16}; 
			\node[color=cyan] at (5.5, 2.5){18}; 
			
			\node[color=blue] at (.5, 1.5){7}; 
			\node[color=blue] at (1.5, 1.5){11}; 
			\node [color=blue] at (2.5, 1.5){14}; 
			\node [color=blue] at (3.5, 1.5){17}; 
			\node at (4.5, 1.5){19}; 
			\node[color=cyan] at (5.5, 1.5){20}; 
			
			\node[color=blue] at (.5, .5){10}; 
			\node[color=blue] at (1.5, .5){13}; 
			\node[color=blue] at (2.5, .5){16}; 
			\node  [color=blue] at (3.5, .5){18}; 
			\node [color=blue] at (4.5, .5){20}; 
			\node at (5.5, .5){21}; 
		\end{scope}
		
		\begin{scope}[xshift=18cm]
			\foreach \x in {0, 1,2,3,4, 5,6,7} {\draw[thick] (0,\x) -- (7, \x); }
			\foreach \x in {0, 1,2,3,4,5,6,7} {\draw[thick] (\x,0) -- ( \x,7); }
			
			\node at (.5, 6.5){1}; 
			\node  [color=cyan]at (1.5, 6.5){2}; 
			\node [color=cyan] at (2.5, 6.5){3}; 
			\node[color=cyan] at (3.5, 6.5){6}; 
			\node[color=cyan] at (4.5, 6.5){8}; 
			\node[color=cyan] at (5.5, 6.5){12}; 
			\node[color=cyan] at (6.5, 6.5){15}; 
			
			\node  [color=blue] at (.5, 5.5){2}; 
			\node  at (1.5, 5.5){4}; 
			\node [color=cyan] at (2.5, 5.5){5}; 
			\node[color=cyan] at (3.5, 5.5){7}; 
			\node[color=cyan] at (4.5, 5.5){11}; 
			\node[color=cyan] at (5.5, 5.5){14}; 
			\node[color=cyan] at (6.5, 5.5){19};

			\node [color=blue] at (.5, 4.5){3}; 
			\node [color=blue]  at (1.5, 4.5){6}; 
			\node at (2.5, 4.5){9}; 
			\node[color=cyan] at (3.5, 4.5){10}; 
			\node[color=cyan] at (4.5, 4.5){13}; 
			\node[color=cyan] at (5.5, 4.5){18}; 
			\node[color=cyan] at (6.5, 4.5){21}; 
			
			\node [color=blue] at (.5, 3.5){5};
			\node   [color=blue]  at (1.5, 3.5){8}; 
			\node  [color=blue] at (2.5, 3.5){12}; 
			\node at (3.5, 3.5){16}; 
			\node[color=cyan] at (4.5, 3.5){17}; 
			\node[color=cyan] at (5.5, 3.5){20}; 
			\node[color=cyan] at (6.5, 3.5){24}; 
			
			\node[color=blue] at (.5, 2.5){7}; 
			\node [color=blue] at (1.5, 2.5){11}; 
			\node [color=blue]  at (2.5, 2.5){15}; 
			\node [color=blue] at (3.5, 2.5){19}; 
			\node at (4.5, 2.5){22}; 
			\node[color=cyan] at (5.5, 2.5){23}; 
			\node[color=cyan] at (6.5, 2.5){25}; 
			
			\node[color=blue] at (.5, 1.5){10}; 
			\node[color=blue] at (1.5, 1.5){14}; 
			\node [color=blue] at (2.5, 1.5){18}; 
			\node [color=blue] at (3.5, 1.5){21}; 
			\node [color=blue] at (4.5, 1.5){24}; 
			\node at (5.5, 1.5){26}; 
			\node[color=cyan] at (6.5, 1.5){27}; 
			
			\node[color=blue] at (.5, .5){13}; 
			\node[color=blue] at (1.5, .5){17}; 
			\node[color=blue] at (2.5, .5){20}; 
			\node  [color=blue] at (3.5, .5){23}; 
			\node [color=blue] at (4.5, .5){25}; 
			\node [color=blue] at (5.5, .5){27}; 
			\node at (6.5, .5){28}; 
			
		\end{scope}
		
		\begin{scope}[xshift=26cm]
			\foreach \x in {0, 1,2,3,4, 5,6,7,8} {\draw[thick] (0,\x) -- (8, \x); }
			\foreach \x in {0, 1,2,3,4,5,6,7,8} {\draw[thick] (\x,0) -- ( \x,8); }
			
			\node at (.5, 7.5){1}; 
			\node  [color=cyan]at (1.5, 7.5){2}; 
			\node [color=cyan] at (2.5, 7.5){3}; 
			\node[color=cyan] at (3.5, 7.5){6}; 
			\node[color=cyan] at (4.5, 7.5){8}; 
			\node[color=cyan] at (5.5, 7.5){12}; 
			\node[color=cyan] at (6.5, 7.5){15}; 
			\node[color=cyan] at (7.5, 7.5){20}; 
			
			\node  [color=blue] at (.5, 6.5){2}; 
			\node  at (1.5, 6.5){4}; 
			\node [color=cyan] at (2.5, 6.5){5}; 
			\node[color=cyan] at (3.5, 6.5){7}; 
			\node[color=cyan] at (4.5, 6.5){11}; 
			\node[color=cyan] at (5.5, 6.5){14}; 
			\node[color=cyan] at (6.5, 6.5){19}; 
			\node[color=cyan] at (7.5, 6.5){23}; 
			
			\node [color=blue] at (.5, 5.5){3}; 
			\node [color=blue]  at (1.5, 5.5){6}; 
			\node at (2.5, 5.5){9}; 
			\node[color=cyan] at (3.5, 5.5){10}; 
			\node[color=cyan] at (4.5, 5.5){13}; 
			\node[color=cyan] at (5.5, 5.5){18}; 
			\node[color=cyan] at (6.5, 5.5){22}; 
			\node[color=cyan] at (7.5, 5.5){27}; 
			
			\node [color=blue] at (.5, 4.5){5};
			\node   [color=blue]  at (1.5, 4.5){8}; 
			\node  [color=blue] at (2.5, 4.5){12}; 
			\node at (3.5, 4.5){16}; 
			\node[color=cyan] at (4.5, 4.5){17}; 
			\node[color=cyan] at (5.5, 4.5){21}; 
			\node[color=cyan] at (6.5, 4.5){26}; 
			\node[color=cyan] at (7.5, 4.5){29}; 
			
			\node[color=blue] at (.5, 3.5){7}; 
			\node [color=blue] at (1.5, 3.5){11}; 
			\node [color=blue]  at (2.5, 3.5){15}; 
			\node [color=blue] at (3.5, 3.5){20}; 
			\node at (4.5, 3.5){24}; 
			\node[color=cyan] at (5.5, 3.5){25}; 
			\node[color=cyan] at (6.5, 3.5){28}; 
			\node[color=cyan] at (7.5, 3.5){32}; 
			
			\node[color=blue] at (.5, 2.5){10}; 
			\node[color=blue] at (1.5, 2.5){14}; 
			\node [color=blue] at (2.5, 2.5){19}; 
			\node [color=blue] at (3.5, 2.5){23}; 
			\node [color=blue] at (4.5, 2.5){27}; 
			\node at (5.5, 2.5){30}; 
			\node[color=cyan] at (6.5, 2.5){31}; 
			\node[color=cyan] at (7.5, 2.5){33}; 
			
			\node[color=blue] at (.5, 1.5){13}; 
			\node[color=blue] at (1.5, 1.5){18}; 
			\node[color=blue] at (2.5, 1.5){22}; 
			\node  [color=blue] at (3.5, 1.5){26}; 
			\node [color=blue] at (4.5, 1.5){29}; 
			\node [color=blue] at (5.5, 1.5){32}; 
			\node at (6.5, 1.5){34}; 
			\node[color=cyan] at (7.5, 1.5){35}; 
			
			\node[color=blue] at (.5, .5){17}; 
			\node[color=blue] at (1.5, .5){21}; 
			\node[color=blue] at (2.5, .5){25}; 
			\node  [color=blue] at (3.5, .5){28}; 
			\node [color=blue] at (4.5, .5){31}; 
			\node [color=blue] at (5.5, .5){33}; 
			\node [color=blue] at (6.5, .5){35}; 
			\node at (7.5, .5){36}; 
			
		\end{scope}
	\end{tikzpicture}
	\caption{ Filling of the rectangle when $r=3, 4, 5, 6, 7, 8$ giving $N^r_{g,g-1}$. }
	\label{fig_Nfilling}
\end{figure}

As the family of chains of elliptic curves has the expected dimension, the limit $\g{r}{g-1}$ smooths, (see~\cite[Theorem~2.1]{DBN}),  and we obtain a component of $\BN{g}{r}{g-1}$ 
whose general point is a curve admitting a $\g{r}{g-1}$. This $\g{r}{g-1}$ is not a theta-characteristic, as the filling is asymmetric.

\begin{defn}\label{defn_Ncomponent}
	We denote by $N^r_{g,g-1}$ the component of $\BN{g}{r}{d-1}$ obtained from the family of elliptic curves with a limit $\g{r}{g-1}$ and torsion conditions given by the fillings in \Cref{fig_Nfilling}.
\end{defn}

As a result of the torsion conditions imposed by a symmetric filling, we show that the components $ET^r_g$ and $N^r_{g,g-1}$ are distinct.

\begin{theorem}\label[theorem]{thredMgrd} For $r\geq 3$ and $g= {r+2\choose 2}$, the Brill--Noether locus ${\mathcal M}^r_{g,g-1}$ is reducible with one component consisting of curves with a theta-characteristic 
	and at least another component  with generic curve not possessing a theta-characteristic of dimension $r$.
\end{theorem}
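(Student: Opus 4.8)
The plan is to exhibit two components of $\mathcal{M}^r_{g,g-1}$ of the same (expected) dimension $3g-3-\binom{r+1}{2}$ and show that they are genuinely distinct. The first component is $ET^r_g$ from \Cref{exthch}, whose generic point is a curve carrying an $r$-dimensional theta-characteristic; the second is $N^r_{g,g-1}$, built from the asymmetric admissible fillings in \Cref{fig_Nfilling}. Since the fillings in \Cref{fig_Nfilling} satisfy conditions (A1)--(A3) and the family of elliptic chains has codimension equal to $-\rho(g,r,g-1)$ in the space of chains, \cite[Theorem~2.1]{DBN} guarantees that the limit $\g{r}{g-1}$ smooths to a nearby nonsingular curve, so $N^r_{g,g-1}$ is indeed an irreducible component of $\mathcal{M}^r_{g,g-1}$ of the expected dimension. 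Both components therefore live in $\mathcal{M}^r_{g,g-1}$ and neither is contained in the other for dimension reasons, so the crux is to prove $ET^r_g \neq N^r_{g,g-1}$.

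The key step is to show that the generic curve parametrized by $N^r_{g,g-1}$ does \emph{not} possess a theta-characteristic of dimension $r$, whereas every curve in $ET^r_g$ does; this immediately forces the two components to be distinct. To do this I would analyze the $\g{r}{g-1}$ on the generic curve of $N^r_{g,g-1}$ via its limit on the chain. A $\g{r}{g-1}$ is half-canonical precisely when its associated filling is symmetric under transposition (Serre duality on the chain exchanges a filling with its transpose, as noted in the discussion preceding \Cref{exthch} and \Cref{fig_Nfilling}). The fillings in \Cref{fig_Nfilling} are asymmetric by construction. I would argue that on the generic elliptic chain in the family defining $N^r_{g,g-1}$, this asymmetric filling gives the \emph{only} limit $\g{r}{g-1}$ (up to the combinatorial bijection of \cite[Proposition~1.4]{DBN} applied to the specific torsion configuration), hence there is no symmetric one, hence no half-canonical limit $\g{r}{g-1}$; by semicontinuity this property persists on the generic smooth curve near the chain. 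Conversely the line bundle $L$ underlying the smoothed $\g{r}{g-1}$ on such a curve cannot square to the canonical bundle, so the generic point of $N^r_{g,g-1}$ is not in $T^r_g$.

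I expect the main obstacle to be the last point: ruling out that the generic curve in $N^r_{g,g-1}$ carries \emph{any} half-canonical $\g{r}{g-1}$ (not merely that the particular $\g{r}{g-1}$ we constructed fails to be half-canonical). A priori the curve could acquire an unrelated theta-characteristic with $r+1$ sections. The cleanest way around this is a dimension count: $ET^r_g$ has codimension exactly $\binom{r+1}{2}$ in $\mathcal{M}_g$ by \Cref{exthch}, and it is (by that theorem) the unique component of $T^r_g$ of this codimension when $g=\binom{r+2}{2}$; any curve with an $r$-dimensional theta-characteristic lies in $T^r_g$, and when $g=\binom{r+2}{2}$ one checks $\dim T^r_g = \dim ET^r_g = 3g-3-\binom{r+1}{2} = \dim N^r_{g,g-1}$, so if $N^r_{g,g-1}$ were contained in $T^r_g$ it would have to equal $ET^r_g$. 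Thus it suffices to produce a single curve in $N^r_{g,g-1}$ — or better, to show the generic elliptic chain in that family — admitting no half-canonical limit $\g{r}{g-1}$, which one reads off from the torsion data: the asymmetric filling forces torsion relations on the nodes that are incompatible with the symmetry required by \cite[Theorem~2.1]{DBN} for a half-canonical series, so no symmetric admissible filling exists for that chain. This contradiction with $N^r_{g,g-1} \subseteq ET^r_g$ completes the argument.
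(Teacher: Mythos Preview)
Your strategy is the paper's: exhibit $ET^r_g$ and $N^r_{g,g-1}$, both of the expected dimension, and show the generic curve of $N^r_{g,g-1}$ has no $r$-dimensional theta-characteristic by arguing that the limiting chain admits no symmetric admissible filling. Two points need tightening.

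The dimension detour is unfounded and unnecessary. \Cref{exthch} asserts only that $T^r_g$ has \emph{a} component of codimension $\binom{r+1}{2}$; it says nothing about uniqueness, and Harris's result gives only $\codim T^r_g\le\binom{r+1}{2}$, so neither ``$\dim T^r_g=\dim ET^r_g$'' nor ``$N^r_{g,g-1}\subseteq T^r_g\Rightarrow N^r_{g,g-1}=ET^r_g$'' is justified. Fortunately you do not need it: once the generic chain $X$ in the $N$-family carries no symmetric filling, you get $N^r_{g,g-1}\not\subseteq T^r_g$ directly, and equal dimensions finish. More importantly, your closing ``torsion incompatibility'' is the right idea but not yet a proof; the paper makes it concrete by counting $2$-torsion. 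The $N$-filling imposes $2(P_i-Q_i)\equiv 0$ on exactly two elliptic components (the two repeated indices sitting at grid distance $2$), whereas any symmetric filling of the $(r+1)\times(r+1)$ square forces the $r$ entries at positions $(i,i+1)$ and $(i+1,i)$ to agree in pairs at grid distance $2$, hence requires at least $r\ge 3$ components with $2$-torsion---impossible on $X$. That is the obstruction you should name; the citation of \cite[Theorem~2.1]{DBN} for ``the symmetry required'' is misplaced (that theorem is about smoothing), and your earlier claim that the asymmetric filling is the \emph{only} limit $\g{r}{g-1}$ on $X$ is stronger than needed and not obviously true.
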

\begin{proof}
From \Cref{exthch}, $\M^r_{g,g-1}$ has a component $ET^r_g$ of codimension ${r+1\choose 2}$ in ${\mathcal M}_g$ consisting of curves with a half-canonical $g_{g-1}^r$. We now show that $N^r_{g,g-1}\nsubseteq ET^r_g$, which also gives the reverse non-containment $ET^r_g \nsubseteq N^r_{g,g-1}$ as these component have the same dimension, thus providing two distinct components of $\BN{g}{r}{g-1}$.

We show in fact that the general curve of $N^r_{g,g-1}$ does not admit a theta-characteristic. Assume for contradiction that $N^r_{g,g-1}\subset ET^r_{g}$. Take a family of curves in $N^r_{g,g-1}$ degenerating to a chain of elliptic curves, $X$, with torsion specified by the fillings in \Cref{fig_Nfilling}. In particular, $X$ has only two components with a $2$-torsion condition. By assumption, $X$ has a limit theta-characteristic, giving a symmetric admissible filling of the $(r+1)\times(r+1)$ square with the entries $1,\dots, g$. As $r\geq 3$, the symmetric filling imposes at least one more $2$-torsion condition than the filling for $N^r_{g,g-1}$, and thus $X$ does not admit such a limit theta-characteristic, hence $N^r_{g,g-1}\nsubseteq ET^r_g$.

Thus, as $N^r_{g,g-1}$ and $ET^r_g$ were defined as the irreducible components of ${\mathcal M}^r_g$ degenerating to the respective chains,
 the general curve in $N^r_{g,g-1}$ does not admit a theta-characteristic $L$ with $h^0(L)=r+1$.
\end{proof}

\section{Brill--Noether loci with at least three components}

We show that for a range of $r$ and $g= {r+2 \choose 2}$, the Brill--Noether loci $\BN{g}{r}{g-1}$ have at least three components, the component $ET^r_g$ of $T^r_g$ found in \Cref{exthch} consisting of curves where the $\g{r}{g-1}$ is a theta-characteristic, a component $N^r_{g,g-1}$ where the $\g{r}{g-1}$ is not a theta-characteristic, and a component containing the locus $\BN{g}{1}{\frac{r+3}{2}}$. We note that the components $ET^r_g$ and $N^r_{g,g-1}$ have the expected dimension of $\BN{g}{r}{g-1}$. 

The component $N^r_{g,g-1}$ is constructed from the fillings in \Cref{fig_Nfilling}, see \Cref{defn_Ncomponent}. The admissible fillings giving $N^r_{g,g-1}$ for $r=3,4,5,6,7,8$ are shown in \Cref{fig_Nfilling}. 

\begin{remark}
	For $g={r+2 \choose 2}$, the $\g{r}{g-1}$ on curves in $N^r_{g,g-1}$ need not be very ample. When $r=3$ (and $g=10$), the limit $\g{3}{9}$ on the chain of elliptic curves given by the $N^3_{10,9}$ fillings is not very ample. Indeed, a limit $\g{2}{7}$ is obtained from the limit $\g{3}{9}$ by subtracting $2P_5$ (which amounts to subtracting $2Q_i$ for $i=1,2,3,4$ and subtracting $2P_i$ for $i=5,6,7,8,9,10$). The divisor $2P_5$ imposes one condition on the chain of elliptic curves, and subtracting $2P_5$ indeed gives a limit $\g{2}{7}$ corresponding to the filling in \Cref{fig:g27_filling}. While this does not show that the smoothed $\g{3}{9}$ is not very ample, it does show there is nothing forcing the $\g{3}{9}$ to be very ample. In fact, we'll observe that in this case the $\g{3}{9}$ is not very ample.
	\begin{figure}[h!]
		\begin{tikzpicture}[scale=.45]
			
			\begin{scope}[ ]
				\foreach \x in {0, 1,2,3,4,5} {\draw[thick] (0,\x) -- (3, \x); }
				\foreach \x in {0, 1,2,3} {\draw[thick] (\x,0) -- ( \x,5); }
				
				\node at (0.5, 4.5) {1};
				\node at (1.5, 4.5) {2};
				\node at (2.5, 4.5) {3};
				
				\node at (0.5, 3.5) {2};
				\node at (1.5, 3.5) {4};
				\node at (2.5, 3.5) {6};
				
				\node at (0.5, 2.5) {3};
				\node at (1.5, 2.5) {5};
				\node at (2.5, 2.5) {7};
				
				\node at (0.5, 1.5) {6};
				\node at (1.5, 1.5) {8};
				\node at (2.5, 1.5) {9};
				
				\node at (0.5, 0.5) {7};
				\node at (1.5, 0.5) {9};
				\node at (2.5, 0.5) {10};	
			\end{scope}
			
		\end{tikzpicture}
		\caption{  Admissible filling giving a $\g{2}{7}$. }
		\label{fig:g27_filling}
	\end{figure}
\end{remark}

\begin{remark}
	When $r=3$, and $g=10$, the existence of three components of $\BN{10}{3}{9}$ is in contrast with \cite[Exercise~1.41]{Harris_Morrison_book}, where the restricted Hilbert scheme $\mathcal{R}_{9,10,3}$ of degree $9$ curves of genus $10$ in $\mathbb{P}^3$ is shown to have two components: a component $\mathcal{J}_2$ where the $\g{3}{9}$ is not a theta-characteristic and the general curve is a divisor of type $(3,6)$ on a quadratic surface, and another component $\mathcal{J}_3$ where the $\g{3}{9}$ is a theta-characteristic and the general curve is a complete intersection of two cubic surfaces. 
	
	In particular, as we show that $\BN{10}{3}{9}$ has three components, the $\g{3}{9}$'s cannot be very ample for the general curve on all of the components. Matching the components of $\BN{10}{3}{9}$ and $\mathcal{R}_{9,10,3}$ we cannot find a match for the component $N^3_{10,9}$. Indeed, the general curve in $\mathcal{J}_2$ has gonality $3$ which should come from the component of $\BN{10}{3}{9}$ containing $\BN{10}{1}{3}$, and the theta-characteristic component $\mathcal{J}_3$ should come from $T^3_{10}$. Thus the $\g{3}{9}$ on the general curve in $N^3_{10,9}$ cannot be very ample, as otherwise we would obtain another component of the restricted Hilbert scheme.
\end{remark}

More precisely, we show that for $r=3,5,7$ or $r\geq 8$ and $g={r+2 \choose 2}$, the Brill--Noether locus $\BN{g}{r}{g-1}$ has at least three components, which are $ET^r_g$, $N^r_{g,g-1}$, and a component containing $\BN{g}{1}{\kappa(g,r,g-1)}$, where $\kappa(g,r,d)\coloneqq \max \left\{k ~\vert~ \BN{g}{1}{k}\subseteq \BN{g}{r}{d}\right\}$, as defined in \cite{AHL}.

\begin{remark} 
	For $r=4$ and $r=6$, we have $\kappa(g,r,g-1)=\floor{\frac{r+3}{2}}$, and \[\rho\left(g,1,\floor{\frac{r+3}{2}}\right)=\rho(g,r,g-1)-1,\] and we cannot exclude the possibility that $N^r_{g,g-1}$ or $ET^r_{g}$ contain the locus $\BN{g}{1}{\floor{\frac{r+3}{2}}}$.
\end{remark}

Since $ET^r_g$ and $N^r_{g,g-1}$ are distinct (see \Cref{thredMgrd}), it remains to show that the gonality locus gives another component. To this end, we investigate the gonality of the curves in the components $ET^r_g$ and $N^r_{g,g-1}$.

\begin{lemma}\label[lemma]{lemgon}
	Let $C$ be a chain of elliptic curves that is in the closure of the locus of $k$-gonal curves. 
	Then, in at least $-\rho$ of the elliptic components the points $P_i, Q_i$ satisfy $m_i(P_i-Q_i)=0$ and these $m_i$ necessarily satisfy  $m_i\le k$.
\end{lemma}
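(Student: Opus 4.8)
\textbf{Proof proposal for \Cref{lemgon}.}

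The plan is to use the correspondence (recalled in \Cref{sec:Background}) between components of the variety of limit linear series on a generic elliptic chain and admissible fillings of a rectangle, applied here to the pencil computing the gonality. A $k$-gonal curve carries a $\g{1}{k}$, i.e.\ a linear series with $r=1$ and $d=k$, for which the Brill--Noether number is $\rho(g,1,k)=2k-2-g$; write $-\rho=g-2k+2$. Degenerating a family of $k$-gonal curves to a chain $X$ of elliptic curves, the limit $\g{1}{k}$ corresponds to a filling of a $2\times(g-k+1)$ rectangle satisfying conditions (A1)--(A3). First I would record that by (A2) the only repeated indices in such a filling are those $i$ for which the $i$-th elliptic component satisfies a torsion condition $m_i(P_i-Q_i)=0$, with $m_i$ dividing the grid distance between the two appearances of $i$.

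Next I would count the repeats. A $2\times(g-k+1)$ rectangle has $2(g-k+1)$ boxes but is filled with indices from $\{1,\dots,g\}$; since there are $2(g-k+1)-g = g-2k+2 = -\rho$ more boxes than available indices, and (by (A1)) no index appears three or more times in a $2$-row rectangle (two strictly increasing columns of height $2$ cannot both contain a thrice-repeated value without violating strict monotonicity — more carefully, an index can appear at most once per row, so at most twice total), there must be at least $-\rho$ indices appearing exactly twice. Each such doubled index $i$ forces, via (A2), a nontrivial torsion relation $m_i(P_i-Q_i)=0$ on the corresponding component, which is the first assertion. The only subtlety is making sure the number of doubled indices is \emph{at least} $-\rho$ and not merely that the ``excess'' equals $-\rho$: since each index contributes at most $1$ to the excess (appearing at most twice), the excess $-\rho$ is at most the number of doubled indices, giving exactly what we need.

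For the bound $m_i\le k$, I would argue that the two boxes containing a repeated index $i$ must lie in different rows (a single row is strictly increasing, so $i$ cannot repeat within it), hence one is in the top row and one in the bottom row; the grid distance between them is then at most the horizontal span of the rectangle, namely $(g-k+1)-1 = g-k$, \emph{plus} the vertical distance $1$, so at most $g-k+1$. Since $m_i$ divides this distance and also $m_i \le$ (distance), we get $m_i \le g-k$ in the naive bound; to sharpen this to $m_i\le k$ one instead uses the vanishing-sequence interpretation: the ramification of the $\g{1}{k}$ forced at $P_i$ and $Q_i$ by the positions of the boxes, together with $\deg = k$, constrains how far apart the two copies of $i$ can sit, because an index appearing in column $a$ of the top row and column $b$ of the bottom row contributes to the ramification sequence, and the total ramification is bounded by the Plücker-type count available for a degree-$k$ pencil. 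I expect the cleanest route is: the torsion order $m_i$ equals the grid distance of the two boxes labeled $i$, and that distance is bounded by $k$ because beyond distance $k$ the resulting vanishing sequence would force $h^0 \ge 3$ or degree $> k$, contradicting that we have a $\g{1}{k}$.

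The main obstacle will be the last step — pinning down the inequality $m_i\le k$ rather than the weaker $m_i\le g-k$. This requires translating the combinatorial grid distance into the order of vanishing of the pencil at the two marked points and invoking the degree constraint; I would model this on the analysis of vanishing sequences for pencils on elliptic chains in \cite{LT} and \cite{DBN}, where the $j$-th box in a row of the filling records the $j$-th element of the vanishing sequence. Everything else (the excess count, the ``at most twice'' observation, reading off torsion from (A2)) is routine bookkeeping with the filling correspondence.
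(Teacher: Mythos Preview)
Your counting argument for the first assertion is correct and matches the paper's: the $2\times(g-k+1)$ rectangle has $2(g-k+1)$ boxes filled with labels from $\{1,\dots,g\}$, each appearing at most twice (once per row), forcing at least $g-2k+2=-\rho$ repeats, each of which imposes a torsion condition via (A2).

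For the bound $m_i\le k$, your route and the paper's diverge. The paper argues combinatorially: with at most $2k-2$ non-repeated labels available, one checks that the extremal filling---placing $1,\dots,k-1$ consecutively in the top row, then each repeated label shifted by $k-1$ columns between the two rows, then $g-k+2,\dots,g$ in the bottom row---realises grid distance exactly $k$, and that fewer non-repeated labels only shrink the achievable gap. Your proposed route via vanishing sequences is different and in fact cleaner: on the $i$-th elliptic component the aspect of the limit $\g{1}{k}$ is a degree-$k$ bundle with two sections whose vanishing orders $u_0<u_1$ at $P_i$ lie in $\{0,\dots,k\}$; when $i$ is repeated the torsion relation is $(u_1-u_0)(P_i-Q_i)\equiv 0$, so $m_i\le u_1-u_0\le k$. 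This is exactly the ``degree $>k$'' half of your heuristic; the ``$h^0\ge 3$'' alternative you mention is a red herring and should be dropped---nothing about a large grid distance forces extra sections, it simply forces a vanishing order exceeding the degree. Once you state it this way the ``main obstacle'' you flag dissolves into a one-line observation, and your argument is complete without needing the extremal-filling analysis the paper sketches.
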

\begin{proof} If a chain of elliptic curves is the limit of $k$-gonal curves, then one can fill a $2\times (g-k+1)$ rectangle with the indices $1,2,\dots, g$.
	This requires a minimum of $2\times (g-k+1)-g$ repeats of indices. 
	Because one of the dimensions of the rectangle is 2, an index cannot appear more than twice.
	There are therefore at most $g-[2\times (g-k+1)-g]=2k-2$ non repeated indices in the filling of the rectangle.
	To obtain maximal torsion among the points $P_i, Q_i$, the two appearances of the index $i$ should be placed at the maximum possible distance.
	This can be achieved by placing the first $k-1$ indices in the first column to create a gap of $k$ then repeated indices one on each column and the last $k-1$ indices in the last column to close the gap.
	Note that if there are fewer non-repeated indices, the maximum $m_i$ will decrease rather than increase.
\end{proof}

\begin{lemma}\label[lemma]{lemgonT+N}
	The general curve $C\in N^r_{g,g-1}$ admits no $g^1_k$ for \[k\le \begin{cases}
		r, & \text{if $r$ is odd} \\
		r+1, & \text{if $r$ is even}
	\end{cases}.\]
	The general curve in $ET^r_g$ admits no $g^1_k$ for $k\le 2r-1$.
\end{lemma}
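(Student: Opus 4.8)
The plan is to translate each non-existence statement into a statement about one reducible curve and then into a finite combinatorial check. If the general member of $N^r_{g,g-1}$ (resp.\ $ET^r_g$) carried a $\g{1}{k}$, then $N^r_{g,g-1}\subseteq\BN{g}{1}{k}$ (resp.\ $ET^r_g\subseteq\BN{g}{1}{k}$), so the reference curve lying in the closure of that component — the elliptic chain $X_N$ underlying $N^r_{g,g-1}$, with the torsion of \Cref{fig_Nfilling}, or the chain $X_{ET}$ of $g-2$ elliptic curves and one genus-two curve underlying $ET^r_g$ (see \Cref{exthch}) — would carry a limit $\g{1}{k}$. So it suffices to show $X_N$ and $X_{ET}$ admit no limit $\g{1}{k}$ in the stated ranges of $k$.

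For $X_N$ I would first record the torsion data: the components with a torsion condition are exactly those labelled by a repeated entry of the filling in \Cref{fig_Nfilling}, and $\operatorname{ord}(P_i-Q_i)$ there equals the grid distance between the two occurrences of that label; since each South-West to North-East diagonal is filled so that its second half copies its first half, a diagonal of length $\ell$ produces $\lfloor\ell/2\rfloor$ repeated labels, of order $\ell$ when $\ell$ is even and order $\ell+1$ when $\ell$ is odd. A limit $\g{1}{k}$ on a chain of $g$ elliptic curves is the same as a filling of a $2\times(g-k+1)$ rectangle by $1,\dots,g$ with strictly increasing rows and columns in which a label $x$ repeats only on a torsion component and then the grid distance $d_x$ of its two occurrences is divisible by $\operatorname{ord}(P_x-Q_x)$; moreover, following the lattice walk $\eta(t)=\#\{\text{row-}1\text{ entries}\le t\}-\#\{\text{row-}2\text{ entries}\le t\}$, which has exactly $k-1$ steps $+1$, one gets $\operatorname{ord}(P_x-Q_x)\le d_x=\eta(x)+1\le k$. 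So every repeated label of such a filling lies on a torsion component of order $\le k$, of which $X_N$ has $\tfrac12 k^2$ when $k$ is even and $\tfrac12(k-1)^2$ when $k$ is odd, whereas the filling must contain $g-2k+2=\binom{r+2}{2}-2k+2$ repeated labels; a short computation shows $\tfrac12 k^2<\binom{r+2}{2}-2k+2$ (resp.\ $\tfrac12(k-1)^2<\binom{r+2}{2}-2k+2$) for all $k\le r$ when $r$ is odd, and for all $k\le r-1$ when $r$ is even. This settles the odd-$r$ case (and part of the even-$r$ case) of the first statement.

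For the remaining values — $k\in\{r,r+1\}$ when $r$ is even, and the full range $k\le 2r-1$ for $ET^r_g$ — the cardinality count no longer obstructs, and one must show the admissible torsion components cannot be fitted into an admissible rectangle. Here I would argue positionally: $\eta$ reaches height $h$ only after at least $h$ of the labels occurring exactly once in the chain have been passed, and in \Cref{fig_Nfilling} and \Cref{figsqthc2} these non-repeating labels sit only at the midpoints of the odd-length diagonals and at the two corners, so they are few and their values are prescribed, while a torsion component of order $m$ that is used forces $\eta\equiv -1\pmod m$, in particular $\eta\ge m-1$, at its label. For $X_{ET}$ one additionally uses that the genus-two component $D$ has both nodes at Weierstrass points, so a limit $\g{1}{k}$ restricts on $D$ to a pencil with very constrained ramification at two Weierstrass points, which behaves like a strong "super-torsion" condition across the two components $D$ replaces. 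Matching the heights forced by the successive torsion orders, and by the genus-two block, against the scarce supply of non-repeating labels then shows no admissible filling exists.

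The first two steps are routine; I expect the positional analysis of the last step to be the real difficulty. For $N^r_{g,g-1}$ with $r$ even this is a bounded perturbation of the counting, but for $ET^r_g$, whose bound $2r-1$ is about twice the $N^r_{g,g-1}$ one, one must genuinely exploit both the precise placement of the $2$-torsion versus higher-order components produced by the symmetric filling and the rigidity of the Weierstrass-pointed genus-two component; that is where the bulk of the work goes.
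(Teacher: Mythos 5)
Your overall strategy is the same as the paper's: specialize to the boundary chain and rule out a limit $\g{1}{k}$ using the combinatorics of $2\times(g-k+1)$ fillings, where every repeated index must sit on a component whose torsion order divides the grid distance, which is at most $k$ (this is exactly the content of \Cref{lemgon}, and your lattice-walk derivation of $d_x=\eta(x)+1\le k$ is a clean way to see it). Your torsion bookkeeping for the fillings of \Cref{fig_Nfilling} is also correct: a diagonal of length $\ell$ contributes $\lfloor \ell/2\rfloor$ repeats of order $2\lceil \ell/2\rceil$, and the cardinality comparison you set up does settle the $N^r_{g,g-1}$ statement for $r$ odd, $k\le r$, and for $r$ even, $k\le r-1$.

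The genuine gap is that everything beyond this counting is only a sketch, and that is precisely where the lemma's content lies. For $N^r_{g,g-1}$ with $r$ even and $k\in\{r,r+1\}$, and for the entire $ET^r_g$ claim $k\le 2r-1$ (which is the bound actually needed later for the three-component theorem), you propose a ``positional analysis'' of walk heights against the few non-repeated labels but do not carry it out; as your own count shows, cardinality alone does not obstruct these cases, so some such argument is unavoidable and must be written down. Moreover, for $ET^r_g$ the degeneration is not a chain of elliptic curves: by the construction in \Cref{exthch} two elliptic components are replaced by a genus-two curve attached at Weierstrass points, so neither \Cref{lemgon} nor the rectangle combinatorics applies verbatim, and your ``super-torsion'' heuristic for the genus-two block is not a proof. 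You also miss that $r=3$ needs separate treatment: there the components carrying the largest torsion are exactly the ones absorbed into the genus-two curve, and the paper instead invokes the classical fact that a general genus-$10$ curve with a $3$-dimensional theta-characteristic is a complete intersection of two cubics, hence has gonality $6$. By contrast, the paper's proof is short because it reads off the bounds directly from the maximal torsion order present in the chain ($2\lceil\frac{r+1}{2}\rceil$ for $N^r_{g,g-1}$, and $2r$ on an elliptic component of the $ET^r_g$ chain when $r\neq 3$) via \Cref{lemgon}; if you prefer your route, you must either justify why a maximal-order torsion component is forced to be used or complete the positional/parity argument you only gesture at.
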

\begin{proof} In the filling of the square corresponding to a point in $N^r_{g,g-1}$, there are points $P_i, Q_i$ such that the torsion is $2\lceil{\frac {r+1}2}\rceil$. 
If the curve is $k$-gonal, \Cref{lemgon} shows $2\ceil{\frac{r+1}{2}}\leq k$.
 Thus the curve admits no $\g{1}{k}$ with 	\[k\leq \begin{cases}	r, & \text{if $r$ is odd} \\		r+1, & \text{if $r$ is even}	\end{cases}.\]
		
In the filling of the square corresponding to a point in $ET^r_{g}$, if $r\neq 3$ there are points $P_i, Q_i$ such that the torsion is $2r$.
 If the curve is $k$-gonal, \Cref{lemgon} again shows $2r\le k$. 
 Hence the curve admits no $\g{1}{k}$ with $k\leq 2r-1$.
  If $r=3$, it is well-known that the general curve of genus $10$ with a theta-characteristic is a complete intersection of two cubics, hence has gonality $6$, thus also not admitting a $\g{1}{k}$ with $k\leq 2r-1$.
\end{proof}

\begin{lemma}\label[lemma]{lem_kappa_rho_bounds}
	Let $r\geq 3$ be an integer, and $g={r+2 \choose 2}$. Then 
	\begin{itemize}
		\item if $r\geq 3$ we have $\kappa(g,r,g-1)\leq r$, and 
		\item if $r=8,10,12$ or $r\geq 14$, we have $\rho(g,1,\kappa(g,r,g-1))>\rho(g,r,g-1)$.
	\end{itemize}
\end{lemma}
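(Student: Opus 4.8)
The plan is to reduce everything to Pflueger's dimension formula for Brill--Noether loci of curves of fixed gonality. By the results of Pflueger and Jensen--Ranganathan, packaged as needed in \cite{AHL} (see also \cite{pflueger_lego,jensen_ranganathan}), for $k\geq 2$ one has $\BN{g}{1}{k}\subseteq\BN{g}{r}{d}$ if and only if $\rho_k(g,r,d)\geq 0$, where
\[
\rho_k(g,r,d)=\max_{0\le\ell\le\min(r,\,g-d+r)}\Bigl(g-(r+1-\ell)(g-d+r-\ell)-\ell k\Bigr).
\]
Since each term is non-increasing in $k$ and $\rho_2(g,r,g-1)\geq 0$ for $g=\binom{r+2}{2}$, the set $\{k:\rho_k(g,r,g-1)\geq 0\}$ is finite and non-empty, so $\kappa(g,r,g-1)$ equals its maximum. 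With $d=g-1$ we have $g-d+r=r+1$, so $\rho_k(g,r,g-1)=\max_{0\le\ell\le r}\bigl(g-(r+1-\ell)^2-\ell k\bigr)$, and $g=\binom{r+2}{2}$ gives $\rho(g,r,g-1)=g-(r+1)^2=-\binom{r+1}{2}$ together with $\rho(g,1,k)=2k-g-2$.

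For the first bullet I would note that the $\ell=0$ term of $\rho_k(g,r,g-1)$ is $-\binom{r+1}{2}<0$, so $\rho_k(g,r,g-1)\geq 0$ forces $g-(r+1-\ell)^2-\ell k\geq 0$ for some $\ell\in\{1,\dots,r\}$, hence $k\le\bigl(g-(r+1-\ell)^2\bigr)/\ell$. It then suffices to verify $\bigl(g-(r+1-\ell)^2\bigr)/\ell<r+1$ for all such $\ell$, i.e.\ $g<\ell^2-(r+1)\ell+(r+1)^2$. As a quadratic in $\ell$ the right side has minimum $\tfrac34(r+1)^2$ at $\ell=\tfrac{r+1}{2}$, and $g=\tfrac12(r+1)(r+2)<\tfrac34(r+1)^2$ precisely when $r>1$; hence $k<r+1$ and $\kappa(g,r,g-1)\le r$.

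For the second bullet, put $T\coloneqq\ceil{r/2}+2$. Since $\rho(g,1,\cdot)$ is increasing and $\rho(g,1,T)-\rho(g,r,g-1)=2T-r-3\geq 1$, it suffices to show $\kappa(g,r,g-1)\geq T$, i.e.\ to exhibit one $\ell\in\{1,\dots,r\}$ with $g-(r+1-\ell)^2-\ell T\geq 0$. I would take $\ell$ to be the integer nearest the real maximizer $r+1-\tfrac T2$ of $\ell\mapsto g-(r+1-\ell)^2-\ell T$: writing $q\coloneqq\floor{r/4}$, this is $\ell=3q,\,3q+2,\,3q+2,\,3q+1$ according as $r\equiv 0,2,3,1\pmod 4$. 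A short polynomial computation then gives $g-(r+1-\ell)^2-\ell T=q(q-2),\,q^2-q-1,\,q^2-2q-2,\,q^2-3q-1$ in these four cases, which are non-negative exactly for $r\geq 8$, $r\geq 10$, $r\geq 15$, $r\geq 17$ respectively; the union of these four congruence ranges is precisely $\{8,10,12\}\cup\{r\geq 14\}$.

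The only external input is the equivalence $\BN{g}{1}{k}\subseteq\BN{g}{r}{d}\Leftrightarrow\rho_k(g,r,d)\geq 0$; the rest is elementary. The one delicate point — and the reason the statement singles out $r=8,10,12$ and $r\geq 14$ rather than giving a uniform bound — is that $\rho_T$ must be evaluated at an \emph{integer} $\ell$, so the estimate weakens for small $r$; indeed the conclusion genuinely fails for $r=9,11,13$, where one checks directly that $\kappa(g,r,g-1)=6,7,8$, one short of $T$. Arranging the integral choice of $\ell$ cleanly is exactly what forces the split according to $r\bmod 4$.
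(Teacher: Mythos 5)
Your argument is correct, and it reaches the lemma by a genuinely different route than the paper. The paper does not manipulate Pflueger's $\rho_k$ directly: after checking the hypothesis $g+1<\floor{\frac{g-1}{r}}+g-1$, it invokes the closed-form expression of \cite[Proposition~2.5]{AHL}, namely $\kappa(g,r,g-1)=2r+2+\floor{-\sqrt{2}\sqrt{r(r+1)}}$, and then reduces the first bullet to the inequality $r^2-2r-2\geq 0$ and the second to checking $3r+1+2\floor{-\sqrt{2}\sqrt{r(r+1)}}>0$ for $r=8,10,12$ and $r\geq 14$. You instead never compute $\kappa$ exactly: you bound it above by showing every term $g-(r+1-\ell)^2-\ell k$ of $\rho_k(g,r,g-1)$ is negative once $k\geq r+1$ (the quadratic-minimum estimate $g<\tfrac34(r+1)^2$ is correct for $r>1$), and below by exhibiting, for $T=\ceil{r/2}+2$, an explicit integer $\ell$ near $r+1-\tfrac{T}{2}$ with $\rho_T\geq 0$; I checked the four case computations $q(q-2)$, $q^2-q-1$, $q^2-2q-2$, $q^2-3q-1$ and the resulting thresholds $r\geq 8,10,15,17$ in the classes $r\equiv 0,2,3,1 \pmod 4$, whose union is indeed $\{8,10,12\}\cup\{r\geq 14\}$, and the identity $\rho(g,1,T)-\rho(g,r,g-1)=2T-r-3\geq 1$ is also right. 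Both proofs ultimately rest on the same external input (Pflueger's bound and Jensen--Ranganathan, as packaged in \cite{AHL}); the paper's route is shorter because the exact value of $\kappa$ is handed to it, while yours is more self-contained modulo the containment criterion, at the price of the mod-$4$ case analysis. One small point worth a sentence in a final write-up: the equivalence $\BN{g}{1}{k}\subseteq\BN{g}{r}{d}\Leftrightarrow\rho_k(g,r,d)\geq 0$ is only literally valid for $k$ with $\rho(g,1,k)<0$ (so that $\BN{g}{1}{k}$ is a proper, irreducible locus whose general member has gonality exactly $k$); this is harmless here, since your first-bullet computation shows $\rho_k<0$ for all $k\geq r+1$, and $T=\ceil{r/2}+2$ is far below $\floor{\tfrac{g+3}{2}}$ for $g=\binom{r+2}{2}$, but it should be said. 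Your closing observation that the strict inequality genuinely fails for $r=9,11,13$ (where $\kappa=6,7,8$ and $\rho(g,1,\kappa)=\rho(g,r,g-1)$) is consistent with the paper's formula and explains the hypothesis of the second bullet.
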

\begin{proof}
	To prove the first assertion, we note that since $g+1 < \floor{\frac{g-1}{r}}+g-1$, \cite[Proposition~2.5]{AHL} gives
	 \[\kappa(g,r,g-1)=2r+2+\floor{-2\sqrt{-\rho(g,r,g-1)}}=2r+2 +\floor{-\sqrt{2}\sqrt{r(r+1)}}.\] 
	To verify that $2r+2 +\floor{-\sqrt{2}\sqrt{r(r+1)}}\leq r$ it suffices to show \[2r+2-\sqrt{2}\sqrt{r(r+1)}\leq r.\] 
	This can be seen to be equivalent to $r^2-2r-2\geq 0$, which holds for $r\geq 3$. 
	The first assertion follows.
	
	To prove the second assertion, we notice the equivalence
	 \[\rho(g,1,\kappa(g,r,g-1))>\rho(g,r,g-1) \iff 3r+1+2\floor{-\sqrt{2}\sqrt{r(r+1)}}>0,\] which follows from the computation 
	 \begin{eqnarray*}
	 & &\rho(g,1,\kappa) > \rho(g,r,g-1)\\
	 &\iff & g-2(g-\kappa+1) > g-(r+1)^2 \\
	 &\iff & (r+1)^2 > 2g-2\kappa +2 \\
	 &\iff & (r+1)^2 > 2g-2\left(2r+2+\floor{-\sqrt{2}\sqrt{r(r+1)}}\right) +2 \\
	 &\iff & r^2+6r+3+2\floor{-\sqrt{2}\sqrt{r(r+1)}} >2g\\
	 &\iff & r^2+6r+3+2\floor{-\sqrt{2}\sqrt{r(r+1)}} >(r+2)(r+1) \\
	 &\iff & 3r+1+2\floor{-\sqrt{2}\sqrt{r(r+1)}}>0  .
	 \end{eqnarray*} 
	This last inequality can be checked to hold for $r=8,10,12$ and $r\geq 14$.
\end{proof}

\begin{theorem}\label[theorem]{thm:3comp_BNloci}
	Let $r=3,5,7$ or $r\geq 8$ and $g={r+2\choose 2}$, then $\BN{g}{r}{g-1}$ has at least three components, $ET^r_g$, $N^r_{g,g-1}$, and a component containing $\BN{g}{1}{\kappa(g,r,g-1)}$.
\end{theorem}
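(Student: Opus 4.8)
The plan is to assemble the ingredients already in place. By \Cref{thredMgrd}, $ET^r_g$ and $N^r_{g,g-1}$ are distinct irreducible components of $\BN{g}{r}{g-1}$, each of the expected dimension $3g-3+\rho(g,r,g-1)$, so it remains only to produce a third one. The natural candidate is the component of $\BN{g}{r}{g-1}$ containing the gonality locus $\BN{g}{1}{\kappa}$, where $\kappa\coloneqq\kappa(g,r,g-1)$: by the very definition of $\kappa$ one has $\BN{g}{1}{\kappa}\subseteq\BN{g}{r}{g-1}$, and since $\BN{g}{1}{\kappa}$ is irreducible (e.g.\ as the image of the irreducible Hurwitz space of degree-$\kappa$ covers of $\PP^1$) it is contained in a single irreducible component $\Xi$ of $\BN{g}{r}{g-1}$. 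I would also record the classical fact that, since $\kappa\le r$ by \Cref{lem_kappa_rho_bounds} and therefore $\rho(g,1,\kappa)\le\rho(g,1,r)<0$, the locus $\BN{g}{1}{\kappa}$ has dimension exactly $3g-3+\rho(g,1,\kappa)$. The remaining task is to show $\Xi\notin\{ET^r_g,\,N^r_{g,g-1}\}$.

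The only numerical input needed is the inequality $\rho(g,1,\kappa)\ge\rho(g,r,g-1)$, which holds exactly in the stated range $r=3,5,7$ or $r\ge 8$. For $r=8,10,12$ or $r\ge 14$ it is the (strict) second assertion of \Cref{lem_kappa_rho_bounds}; for the remaining values $r\in\{3,5,7,9,11,13\}$ the identical computation, which reduces the claim to $3r+1+2\lfloor-\sqrt2\sqrt{r(r+1)}\rfloor\ge 0$, yields equality on the nose. (For the excluded values $r=4,6$ the same expression equals $-1$, i.e.\ $\rho(g,1,\kappa)=\rho(g,r,g-1)-1$, which is precisely why in those cases one cannot rule out $\BN{g}{1}{\kappa}\subseteq ET^r_g$ or $\BN{g}{1}{\kappa}\subseteq N^r_{g,g-1}$.) Granting this, suppose $\Xi$ were $ET^r_g$ or $N^r_{g,g-1}$. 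Then $\overline{\BN{g}{1}{\kappa}}\subseteq\Xi$, and chaining the dimension inequalities
\[\dim\BN{g}{1}{\kappa}\le\dim\Xi=3g-3+\rho(g,r,g-1)\le 3g-3+\rho(g,1,\kappa)=\dim\BN{g}{1}{\kappa}\]
forces equality throughout; since $\Xi$ is irreducible this gives $\overline{\BN{g}{1}{\kappa}}=\Xi$, so the general curve of $\Xi$ carries a $\g{1}{\kappa}$. But $\kappa\le r$, while \Cref{lemgonT+N} says the general curve of $ET^r_g$ has no $\g{1}{k}$ for $k\le 2r-1$ and the general curve of $N^r_{g,g-1}$ has no $\g{1}{k}$ for $k\le r$ (when $r$ is odd) or $k\le r+1$ (when $r$ is even) --- a contradiction in every case. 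Hence $\Xi$ is a genuinely new component, and $\BN{g}{r}{g-1}$ has at least three.

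The geometric content of this argument is entirely supplied by the earlier results --- the construction and distinctness of $ET^r_g$ and $N^r_{g,g-1}$ (\Cref{exthch}, \Cref{thredMgrd}) and the gonality bounds on their general members (\Cref{lemgon}, \Cref{lemgonT+N}) --- so the present step is essentially an assembly, with no serious obstacle. The points requiring care are pure bookkeeping: verifying the numerical inequality $\rho(g,1,\kappa)\ge\rho(g,r,g-1)$ on the six exceptional small values $r\in\{3,5,7,9,11,13\}$ via the closed formula for $\kappa$ from the proof of \Cref{lem_kappa_rho_bounds}, and correctly matching $\kappa$ against the three gonality thresholds $r$, $r+1$, and $2r-1$ --- the parity of $r$ entering only through the $N^r_{g,g-1}$ bound, and a single unit of slack in the numerics at $r=4,6$ being exactly what forces those two values out of the statement.
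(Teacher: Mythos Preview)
Your proof is correct and follows essentially the same approach as the paper's: both invoke \Cref{thredMgrd} for the distinctness of $ET^r_g$ and $N^r_{g,g-1}$, the first part of \Cref{lem_kappa_rho_bounds} for $\kappa\le r$, the second part (extended by direct check to $r\in\{3,5,7,9,11,13\}$) for $\rho(g,1,\kappa)\ge\rho(g,r,g-1)$, and \Cref{lemgonT+N} to rule out $\BN{g}{1}{\kappa}$ sitting inside either of the two expected-dimension components. Your packaging is slightly cleaner --- you unify the strict-inequality and equality cases into a single dimension chain argument, whereas the paper treats them separately --- but the content is the same.
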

\begin{proof}
	Let $\kappa=\kappa(g,r,g-1)$. For $r=3,5,7,9,11,13$, we note that 
	\[\dim N^r_{g,g-1}=3g-3+\rho(g,r,g-1)=\dim ET^r_g=3g-3-{r+1\choose 2}=\dim\BN{g}{1}{\kappa}= 3g-3+\rho\left(g,1,\kappa\right),\]
	 thus a non-containment in one direction gives the reverse non-containment as well. 
	 As \Cref{thredMgrd} shows that $N^r_{g,g-1}$ and $ET^r_{g}$ are distinct, it suffices to show that they are both distinct from $\BN{g}{1}{\kappa}$.
	  We note that for $r=3,5,7$, we have $\kappa(g,r,g-1)=\frac{r+3}{2}$, and \Cref{lemgonT+N} shows that $N^r_{g,g-1}\nsubseteq \BN{g}{1}{\frac{r+3}{2}}$ and that $ET^r_g\nsubseteq \BN{g}{1}{\frac{r+3}{2}}$, as desired. As $ET^r_g$, $N^r_{g,g-1}$ and $\BN{g}{1}{\kappa}$ have the same dimension, we obtain at least three distinct components for $r=3,5,7,9,11$, and $13$.
	
	 For $r=8,10,12$ or $r\geq 14$, we see that $\BN{g}{1}{\kappa}$ must give an additional component from the fact that
	 \[\dim \BN{g}{1}{\kappa}=3g-3+\rho(g,1,\kappa)>3g-3+\rho(g,r,g-1)=\dim N^r_{g,g-1}=\dim ET^r_g.\] Hence $\BN{g}{1}{\kappa}\nsubseteq N^r_{g,g-1}$ and $\BN{g}{1}{\kappa}\nsubseteq ET^r_{g}$ for dimension reasons, and as $\kappa(g,r,g-1)\leq r$, we see from \Cref{lemgonT+N} that $N^r_{g,g-1}\nsubseteq \BN{g}{1}{\kappa}$ and $ET^r_g \nsubseteq\BN{g}{1}{\kappa}$.
\end{proof}

\section{Irreducibility of Brill--Noether loci for $r=2$.}\label{sec:Irred BN loci}

In this section, we combine the irreducibility of the Severi variety with a bound on the dimension of covers of curves of smaller genus to obtain irreducibility of  Brill--Noether loci for $r=2$
for sufficiently large degree.
We then show that for smaller degree, the Brill--Noether loci can become reducible.

\begin{lemma}\label[lemma]{rho bound on component coming from covering}
	Suppose there is a component $\Xi$ of $\BN{g}{r}{d}$ such that for the generic  curve $C\in\Xi$, the map given by the $\g{r}{d}$, $\varphi:C\to D\subset \PP^r$, factors through a curve $D$ of geometric genus $\gamma\ge 1$.
	Let $k\geq 2$ be the degree of the map $\varphi:C\to D$. Then $3g-3+\rho(g,r,d)\leq \dim \Xi \leq 2g-2+(3-2k)(\gamma-1)$.
	In particular, if $\rho(g,r,d)>-g+1$ and for the generic curve $C\in\Xi$, the map given by the $\g{r}{d}$ is not birational, then the geometric genus of $D$ is 0. 
\end{lemma}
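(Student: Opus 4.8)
The plan is to get the lower bound from the standard dimension estimate for Brill--Noether loci, and the upper bound from a Hurwitz-space parameter count for the covering $\varphi\colon C\to D$, and then to deduce the final assertion by comparing the two numerical bounds. For the lower bound I would simply invoke the standard fact that, whenever $\BN{g}{r}{d}$ is a proper subvariety of $\M_g$, each of its irreducible components has dimension at least the expected value $3g-3+\rho(g,r,d)$ (this is implicit in the discussion in the introduction); in particular $\dim\Xi\ge 3g-3+\rho(g,r,d)$.

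For the upper bound, I would first reduce to the case that $D$ is smooth: since $C$ is smooth the map $\varphi$ factors through the normalization of $D$, so we may assume $D$ is a smooth curve of genus $\gamma$ and $\varphi\colon C\to D$ is a finite morphism of degree $k$ (the degree $d$ equals $k\cdot\deg_{\PP^r}D$ plus the degree of the base locus of the $\g{r}{d}$, but nothing about $d$ enters the upper bound). Next I would parametrize: the abstract curve $C$ is recovered from $D$, the branch divisor of $\varphi$ on $D$, and a finite amount of monodromy data --- a connected degree-$k$ cover of the complement of the branch locus corresponds to a conjugacy class of transitive homomorphisms of the finitely generated fundamental group into $S_k$, of which there are finitely many, and it extends uniquely over $D$. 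By Riemann--Hurwitz, the ramification divisor $R$ of $\varphi$ satisfies $\deg R = 2g-2-k(2\gamma-2)\ge 0$, so the branch divisor has at most $\deg R$ points, and non-reduced branch loci or higher ramification only cut the dimension down. Hence the locus of smooth genus-$g$ curves admitting a degree-$k$ map to a smooth genus-$\gamma$ curve has dimension at most
\[
\bigl(\dim\M_\gamma-\dim\operatorname{Aut}(D)\bigr)+\deg R
= (3\gamma-3)+\bigl(2g-2-k(2\gamma-2)\bigr)
= 2g-2+(3-2k)(\gamma-1),
\]
using that $\dim\M_\gamma-\dim\operatorname{Aut}(D)=3\gamma-3$ for a generic genus-$\gamma$ curve and any $\gamma\ge 0$. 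Since the generic point of the irreducible (and closed) variety $\Xi$ lies in this locus, $\Xi$ is contained in its closure, so $\dim\Xi\le 2g-2+(3-2k)(\gamma-1)$.

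For the final statement I would argue by contradiction. Assume $\rho(g,r,d)>-g+1$ and that the map given by the $\g{r}{d}$ on a generic curve of $\Xi$ is not birational onto its image, so its degree $k$ satisfies $k\ge 2$ and hence $3-2k\le -1$. Were $\gamma\ge 1$, then $(3-2k)(\gamma-1)\le 0$, so the upper bound would give $\dim\Xi\le 2g-2$, while the lower bound gives $\dim\Xi\ge 3g-3+\rho(g,r,d)>3g-3+(-g+1)=2g-2$, a contradiction. Therefore $\gamma=0$, i.e.\ $D\cong\PP^1$.

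I expect the main obstacle to be making the Hurwitz-space count airtight: I need to be certain that all the moduli of $C$ inside $\Xi$ are accounted for by varying $D$ in $\M_\gamma$ together with the branch divisor --- so that the freedom in choosing the $\g{r}{d}$ on $C$ contributes nothing extra, which is precisely why one parametrizes $C$ itself rather than the pair $(C,\g{r}{d})$ --- and that the degenerate strata (non-reduced branch loci, higher ramification, and especially the case $\gamma=1$, where the one-dimensional automorphism group of the elliptic base acts trivially on the isomorphism class of the cover and so must be subtracted) genuinely lower the dimension rather than raising it.
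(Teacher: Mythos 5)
Your proof is correct and follows essentially the same route as the paper: the lower bound comes from the determinantal description of Brill--Noether loci, the upper bound from the dimension $2g-2-(2k-3)(\gamma-1)$ of the locus of genus-$g$ curves admitting a degree-$k$ map to a genus-$\gamma$ curve, and the final claim by comparing the two bounds. The only difference is that the paper simply cites Lange's formula for that dimension, whereas you re-derive it as an upper bound via a Hurwitz-type count (moduli of the base plus branch divisor, corrected by automorphisms), which is a legitimate substitute.
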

\begin{proof} From \cite{Lange}, the dimension of the moduli space of curves that have a degree $k$ map to a curve of genus $\gamma\ge 1$ is 
	$\dim M(\gamma ,n) = 2g-2-(2k-3)(\gamma-1)$.
	From the local definition of the Brill--Noether locus as a determinantal variety, $3g-3+\rho(g,r,d)\leq \dim \Xi$.
	Therefore
	\[3g-3+\rho(g,r,d)\leq \dim \Xi \leq  2g-2-(2k-3)(\gamma-1).\]
	If $\rho(g,r,d)> - g+1$, the equation above gives $2g-2<   2g-2-(2k-3)(\gamma-1)$ which implies $\gamma=0$.
\end{proof}

\begin{prop}\label[proposition]{irredM2gd}
	If $\rho(g,2,d)<0$ and either
	\begin{enumerate}
		\item $g\geq 3$ and $d>\frac{g+4}{2}$, or
		\item $g\geq 4$, $d$ is odd, and $d=\frac{g+4}{2}$,
	\end{enumerate} 
	then $\BN{g}{2}{d}$ has a unique irreducible component containing a curve with a basepoint free $\g{2}{d}$, and any other component is comprised of curves where the $\g{2}{d}$ has basepoints.
\end{prop}

\begin{proof}
	The Severi variety of plane curves of degree $d$ and genus $g$ is irreducible (see \cite{Harris_Severi_86}) and has general element a curve containing at worst nodes as singularities, and gives a component of $\BN{g}{2}{d}$ where the general curve has a basepoint free $\g{2}{d}$. Any other component admitting curves with a basepoint free $\g{2}{d}$ would correspond to a curves with a non-birational map to the projective plane.
	Suppose then that $\Xi$ is such a component of $\BN{g}{2}{d}$.
	Let $C$ be a generic point of $\Xi$  and  $\varphi:C\to D\subset \PP^2$ the map (of degree $k\ge 2$) associated to the $\g{2}{d}$. 
	
	In case (1) and (2), one can readily check that $\rho(g,2,d)>-g+1$.
	By \Cref{rho bound on component coming from covering}, we must have the genus $\gamma $ of $D$ satisfies $\gamma=0$.
	
	As the map $\varphi$ associated to the $\g{2}{d}$ is nondegenerate, $D$ is birationally embedded in $\PP^2$ via a $\g{2}{e}$ on a smooth model of $D$
	and therefore	 $e\geq 2$, thus $2k\leq ek=d$. Now \Cref{rho bound on component coming from covering} gives 
	$g+2+\rho(g,2,d)\leq d$, which is equivalent to $d\leq \frac{g+4}{2}$. Thus in case (1) $\BN{g}{2}{d}$ has a unique component coming from the image of the Severi variety.
	
	Now suppose we are in case (2). 
	As the genus of $D$ is $0$, we have $3g-3+\rho(g,2,d)\leq 2g-5+2k$, which is equivalent to $3d\leq g+4+2k$. As $2k\leq d$ and by assumption $2d= g+4$, then \[3d\leq g+4+2k\leq 2d+d,\] 
	thus in fact $2k=d$, contradicting the  assumption that $d$ is odd. 
	
	Thus $\BN{g}{2}{d}$ has a unique component where the general curve has a basepoint free $\g{2}{d}$ coming from the image of the Severi variety.
\end{proof}

\begin{remark}
	We note that when $\rho(g,2,d)=-2$, the fact that the curves in every component of $\BN{g}{2}{d}$ have a basepoint free $\g{2}{d}$ follows from codimension bounds for components of Brill--Noether loci, and thus $\BN{g}{2}{d}$ is irreducible in this case, as observed in \cite{CHOI2022}. In future work, we plan to investigate the range where the basepoint free assumption holds.
\end{remark}

We show that the bound in \Cref{irredM2gd} is optimal with three examples, with at least two components namely  $\BN{11}{2}{7}$,  $\BN{12}{2}{8}$ and $\BN{12}{2}{7}$.
Note that in the first example, $7=\frac {11+3}2$ is right below the bound. 
In the second example  the degree $8=\frac{12+4}2$ is  right at the bound but even.

\begin{remark}\label[remark]{rmk_multiplesofg1k}
	We note that for an arbitrary integer $2\leq k \leq \frac{d}{r}$, we have $\BN{g}{1}{k}\subseteq \BN{g}{r}{d}$, since $|r\g{1}{k}+(d-rk)\text{points}|$ gives a $\g{s}{d}$ for some $s\geq r$.
\end{remark}

\begin{example}
We show that $\BN{11}{2}{7}$ has at least two components. 
The image of the Severi variety gives a component $\Xi$ of $\BN{11}{2}{7}$ whose general element admits a $\g{2}{7}$ that gives a map $\phi:C\to \PP^2$ whose image has $4$ nodes as singularities, 
and \cite[Theorem 2.1]{Coppens_Kato} shows that these curves have no $\g{1}{4}$, hence have gonality $\geq 5$ and indeed a $\g{1}{5}$ is given by projecting from a node.
 On the other hand, 
 from \Cref{rmk_multiplesofg1k}, we see that $\BN{11}{1}{3}\subseteq \BN{11}{2}{7}$, hence $\BN{11}{1}{3}$ is contained in a component of $\BN{11}{2}{7}$. 
 As $\dim(\BN{11}{1}{3})=\dim(\Xi)$, and the general curve of $\Xi$ admits no $\g{1}{3}$, the component containing $\BN{11}{1}{3}$ cannot be $\Xi$, and so $\BN{11}{2}{7}$ has at least two components.
\end{example}

\begin{example}
Similarly, we show that $\BN{12}{2}{8}$ has at least two components. 
The image of the Severi variety gives a component whose general element admits a $\g{2}{8}$ that gives a map $\phi:C\to\PP^2$ whose image has $9$ nodes as singularities, 
and \cite[Theorem 2.1]{Coppens_Kato} shows that these curves have gonality $6$. 
The other component comes from curves where the map corresponding to the $\g{2}{8}$ factors as $C\to D\to \PP^2$, where $D$ has geometric genus $0$ and $D\to\PP^2$ is given by a complete $\g{2}{2}$ on $D$. 
One can readily check that the dimension count in \cite{Lange} shows that this is the only possibility for the geometric genus of $D$. 
A curve $C\in\BN{12}{2}{8}$ of gonality $4$ gives exactly such a curve, and 
Remark \ref{rmk_multiplesofg1k} shows that $\BN{12}{1}{4}\subset \BN{12}{2}{8}$, giving the second component of $\BN{12}{2}{8}$ whose general element has gonality $4$. 
\end{example}

The construction of \Cref{example_12_2_7} of Brill--Noether loci with multiple components is quite general, as observed in \cite[Appendix A.1]{pflueger_lego}.

\begin{remark}[{\cite[Appendix~A.1]{pflueger_lego}}]\label[remark]{thm_large_gon_comp}
	Suppose that $0>\rho(g,1,k)>\rho(g,r,d)$, $\BN{g}{r}{d}$ has a component of expected dimension, and $\BN{g}{1}{k}\subseteq \BN{g}{r}{d}$. Then $\BN{g}{r}{d}$ has at least two components. Indeed, as $\dim \BN{g}{1}{k} = 3g-3 +\rho(g,1,k)> 3g-3 + \rho(g,r,d)= \operatorname{exp}\dim \BN{g}{r}{d}$, the component of $\BN{g}{r}{d}$ of expected dimension cannot contain $\BN{g}{1}{k}$.
\end{remark}

When $r=2$, this gives Brill--Noether loci $\BN{g}{2}{d}$ that always have at least two components for some range of $d$, one component of expected dimension, and another coming from a non-trivial containment of Brill--Noether loci $\BN{g}{1}{k}\subseteq \BN{g}{2}{d}$.

\begin{cor}
	Suppose $g>8$ and $\floor{\frac{g+4}{2}}>d\geq \frac{g}{3}+3$, then the Brill--Noether locus $\BN{g}{2}{d}$ has at least two components.
\end{cor}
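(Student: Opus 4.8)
The plan is to apply \Cref{thm_large_gon_comp} with $r=2$ and gonality $k=\floor{d/2}$; this particular choice is what makes the three numerical constraints of that theorem hold simultaneously. Throughout I would use that $\rho(g,2,d)=3d-2g-6$ and $\rho(g,1,k)=2k-g-2$, and that the hypothesis $\floor{\frac{g+4}{2}}>d$ is equivalent (for integer $d$) to $2d\le g+2$, which is the inequality I would lean on repeatedly.

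\emph{Component of expected dimension.} First I would invoke \cite[Theorem~A]{pflueger_lego} (or \cite[Theorem~2.1]{DBN}): $\BN{g}{2}{d}$ has a component of the expected dimension $3g-3+\rho(g,2,d)$ as soon as $-\rho(g,2,d)+3\le g$, and unwinding this gives exactly $d\ge\frac{g}{3}+3$, our hypothesis. From $2d\le g+2$ one also reads off $\rho(g,2,d)<0$ (so $\BN{g}{2}{d}$ is a proper subvariety) and, since $2k\le d\le\frac{g+2}{2}$, that $\rho(g,1,k)<0$ as well.

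\emph{The containment $\BN{g}{1}{k}\subseteq\BN{g}{2}{d}$.} Given a curve $C$ carrying a degree-$k$ line bundle $A$ with $h^0(A)\ge 2$, I would take two independent sections $s,t\in H^0(A)$ and observe that $s^2,st,t^2\in H^0(A^{\otimes 2})$ are linearly independent, since a nontrivial relation among them would exhibit the non-constant rational function $s/t$ as a root of a degree-$2$ polynomial with constant coefficients, forcing $s/t$ to be constant. Hence $h^0(A^{\otimes 2})\ge 3$; since $2k=2\floor{d/2}\le d$, twisting $A^{\otimes 2}$ by an effective divisor of degree $d-2k$ yields a degree-$d$ line bundle with at least $3$ sections, so $C\in\BN{g}{2}{d}$. (Alternatively this containment can be quoted from \cite[Theorem~A]{jensen_ranganathan}.) As $\BN{g}{1}{k}$ is non-empty and proper, it follows that $\BN{g}{1}{k}\subseteq\BN{g}{2}{d}$.

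\emph{The inequality $0>\rho(g,1,k)>\rho(g,2,d)$.} The left-hand inequality was noted above; for the right-hand one, $\rho(g,1,k)>\rho(g,2,d)$ reduces to $2k>3d-g-4$, and since $2k=2\floor{d/2}\ge d-1$ it suffices that $d-1>3d-g-4$, i.e.\ $g+3>2d$, which again follows from $2d\le g+2$. With all three hypotheses in place, \Cref{thm_large_gon_comp} produces at least two components of $\BN{g}{2}{d}$. The only genuine care required is the coordination of the constraints $2k\le d$, $\rho(g,1,k)<0$, and $\rho(g,1,k)>\rho(g,2,d)$; the single choice $k=\floor{d/2}$ threads all of them once $2d\le g+2$, while the hypothesis $g>8$ (together with $d\ge\frac{g}{3}+3$) merely keeps $k=\floor{d/2}\ge 2$, so I do not anticipate any serious obstacle beyond this bookkeeping.
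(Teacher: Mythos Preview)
Your proof is correct and follows the same strategy as the paper: produce a component of expected dimension via Pflueger/DBN, set $k=\floor{d/2}$, show $\BN{g}{1}{k}\subseteq\BN{g}{2}{d}$, and verify $\rho(g,1,k)>\rho(g,2,d)$ to force a second, larger component. The only real difference is in the containment step: the paper quotes \cite[Proposition~2.5]{AHL} after checking $d+\floor{d/2}<g+1$ (which is where the hypothesis $g>8$ actually gets used there), whereas you give the direct elementary argument via the independence of $s^2,st,t^2$ in $H^0(A^{\otimes 2})$ and then twist up to degree $d$. Your route is more self-contained and makes the role of $g>8$ lighter (you only need it to keep $k\ge 2$), at the cost of not pinning down $\kappa(g,2,d)$ precisely as the AHL result does; for the corollary at hand either justification is adequate.
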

\begin{proof}
We note that the assumption $d\geq \frac{g}{3}+3$ is equivalent to $\rho(g,2,d)\geq -g+3$, hence \cite[Theorem A]{pflueger_lego} or \cite[Theorem 2.1]{DBN}
 give a component of $\BN{g}{2}{d}$ of the expected dimension $3g-3+\rho(g,2,d)$, giving one of the claimed components.

On the other hand  $\BN{g}{1}{k}\subset \BN{g}{2}{d}$ for $k=\floor{\frac{d}{2}}$. 
If $3g-3+\rho(g,1,k)=\dim(\BN{g}{1}{k})>3g-3+\rho(g,2,d)$, then $\BN{g}{2}{d}$ must have a second component of larger dimension which contains $\BN{g}{1}{k}$.
It remains to show that $3g-3+\rho(g,1,k)>3g-3+\rho(g,2,d)$, or equivalently that 
	\begin{equation}
		\rho(g,1,k)>\rho(g,2,d). \label{eq:mg2d_red_ineq}
	\end{equation} 
Expanding and recalling that $k=\floor{\frac{d}{2}}$, we see that \eqref{eq:mg2d_red_ineq} follows from the assumption $d<\floor{\frac{g+4}2}$.
\end{proof}

\begin{example}\label[example]{example_12_2_7} From the corollary above, we see  that $\BN{12}{2}{7}$ has at least two components as 
$\floor{\frac{g+4}{2}}=\frac{12+4}2=8>7=d=\frac{12}3+3= \frac{g}{3}+3$.
\end{example}

\vspace*{1.5cm}


\vfill

\end{document}